\newtheorem{thm}{Theorem}[section]
\newtheorem{lem}[thm]{Lemma}
\newtheorem{conj}[thm]{Conjecture}
\theoremstyle{definition}
\newtheorem{rmk}[thm]{Remark}
\newtheorem{cor}[thm]{Corollary}
\numberwithin{equation}{section}
\newcommand{\var}{\overline}
\newcommand{\bb}{\mathbb}
\DeclareMathOperator{\NS}{NS}
\DeclareMathOperator{\pr}{pr}
\DeclareMathOperator{\rank}{rank}
\DeclareMathOperator{\cl}{cl}
\DeclareMathOperator{\id}{id}
\DeclareMathOperator{\Aut}{Aut}
\DeclareMathOperator{\Sur}{Sur}
\DeclareMathOperator{\Alb}{Alb}
\DeclareMathOperator{\Spec}{Spec}
\title[Dynamical degree and arithmetic degree on product varieties]
{Dynamical degree and arithmetic degree of endomorphisms on product varieties}
\author{Kaoru Sano}
\address{Department of Mathematics, Faculty of Science, Kyoto University, Kyoto 606-8502, Japan}
\email{ksano@math.kyoto-u.ac.jp}
\begin{document}
\maketitle
\footnote[0]{ %2010 MSC numbers
2010 \textit{Mathematics Subject Classification}.
Primary 37P55; Secondary 11G50.
}
\footnote[0]{ %key words and phrases
\textit{Key words and phrases}. 
erithmetic degrees, dynamical degrees, height functions.
}
\begin{abstract}
For a dominant rational self-map on a smooth projective variety
defined over a number field,
Shu Kawaguchi and Joseph H. Silverman conjectured that
the (first) dynamical degree is equal to the arithmetic degree
at an algebraic point whose forward orbit is well-defined and Zariski dense.
We give some examples of self-maps on product varieties and
rational points on them for which the Kawaguchi-Silverman conjecture holds.
\end{abstract}
\section{Introduction}\label{intro} 
Let $X$ be a smooth projective variety defined over a number field $k$,
and $f\colon X\dashrightarrow X$ a dominant rational self-map defined over $k$.
Let $I_f \subset X$ be the indeterminacy locus of $f$.
Let $X_f (\overline{k})$ be the set of $\overline{k}$-rational points $P$
on $X$ such that $f^n(P) \notin I_f$ for every $n$.
For a $\var{k}$-rational point $P\in X_f(\var{k}),$ its {\it forward $f$-orbit} is defined by
$\mathcal{O}_f(P):=\{ f^n(P):n\geq 0\}.$

Let $H$ be an ample divisor on $X$ defined over $k$.
The (first) {\it dynamical degree} of $f$ is defined by
$$\delta_f :=\lim_{n\to \infty} \deg(((f^n)^\ast H)\cdot H^{\dim X -1})^{1/n}.$$

The {\it arithmetic degree} of $f$ at a $\overline{k}$-rational point $P\in X_f(\overline{k})$
is defined by
$$\alpha _f(P):=\lim_{n\to \infty} h_H^+(f^n(P))^{1/n}$$
if the limit on the right hand side exists.
Here, $h_H\colon X(\overline{k})\longrightarrow [0,\infty )$
is the (absolute logarithmic) Weil height function associated with $H$,
and we put $h_H^+:=\max\{ h_H,1\}$. 

Shu Kawaguchi and Joseph H. Silverman formulated the following conjecture.

\begin{conj}[{Kawaguchi-Silverman conjecture (see \cite[Conjecture 6]{rat})}] \label{KS} 
For every $\overline{k}$-rational point $P\in X_f(\overline{k})$,
the arithmetic degree $\alpha_f(P)$ is defined.
Moreover, if the forward $f$-orbit $\mathcal{O} _f(P)$ is Zariski dense in $X$,
the arithmetic degree $\alpha _f(P)$ is equal to the dynamical degree $\delta_f$,
i.e., we have $$\alpha _f(P)=\delta _f.$$
\end{conj}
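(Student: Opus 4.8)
\medskip
\noindent\textbf{Sketch of the approach.}
The plan is to verify the conjecture for the self-maps on product varieties $X = X_1 \times X_2$ that this paper treats, rather than in full generality, by showing that on such $X$ both invariants in the identity $\alpha_f(P) = \delta_f$ respect the product structure, and thereby reducing to the conjecture on the factors $X_1, X_2$, where it is already a theorem (for instance when each $X_i$ is an abelian variety, a projective space, or lies in one of the other classes for which the conjecture is known). First I would dispose of the model case of a product endomorphism $f = f_1 \times f_2$, and only then the genuinely new case of a skew product $f(x,y) = (g(x), h_x(y))$ compatible with the first projection $\pi_1 \colon X \to X_1$.

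For the dynamical degree I would fix ample divisors $H_i$ on $X_i$ and set $H = \pi_1^\ast H_1 + \pi_2^\ast H_2$, which is ample on $X$. When $f = f_1 \times f_2$ one has $(f^n)^\ast H = \pi_1^\ast (f_1^n)^\ast H_1 + \pi_2^\ast (f_2^n)^\ast H_2$, so expanding $\deg\!\big((f^n)^\ast H \cdot H^{\dim X - 1}\big)$ by the projection formula expresses it as a finite sum of products of intersection numbers pulled back from the two factors; extracting $n$-th roots and letting $n \to \infty$ then gives $\delta_f = \max\{\delta_{f_1}, \delta_{f_2}\}$. For a skew product this is replaced by $\delta_f = \max\{\delta_g,\ \lambda\}$, where $\lambda$ is the dynamical degree of $f$ relative to $\pi_1$, via the product formula for relative dynamical degrees of Dinh--Nguyen and Truong.

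For the arithmetic degree the decisive input is the additivity of Weil heights, $h_H(P_1, P_2) = h_{H_1}(P_1) + h_{H_2}(P_2) + O(1)$. Together with $h^+ = \max\{h,1\}$ and the elementary fact that the exponential growth rate of a sum of two nonnegative sequences equals the larger of their growth rates, this yields $\alpha_f(P) = \max\{\alpha_{f_1}(P_1), \alpha_{f_2}(P_2)\}$ for $f = f_1 \times f_2$ and $P = (P_1, P_2)$ (the relevant limits existing in the cases considered, and in general being controlled by the theorem of Kawaguchi--Silverman that $\overline{\alpha}_f(P) \le \delta_f$). The last ingredient is an orbit-closure observation: if $\mathcal{O}_f(P)$ is Zariski dense in $X_1 \times X_2$, then $\overline{\mathcal{O}_f(P)} \subseteq \overline{\mathcal{O}_{f_1}(P_1)} \times X_2$ forces $\overline{\mathcal{O}_{f_1}(P_1)} = X_1$, and likewise $\overline{\mathcal{O}_{f_2}(P_2)} = X_2$; so the conjecture for each factor applies and gives $\alpha_{f_i}(P_i) = \delta_{f_i}$, whence $\alpha_f(P) = \max\{\delta_{f_1}, \delta_{f_2}\} = \delta_f$.

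I expect the main obstacle to lie outside the product-map case, in the skew product situation, where neither side decomposes so cleanly: controlling the arithmetic degree then requires understanding how the heights $h(h_x^n(y))$ along the fibres grow as the base point $x = g^k(P_1)$ moves along its orbit --- which calls for a relative or family version of height theory (via canonical heights when the $h_x$ are polarized of constant degree, or via specialization estimates of Silverman type) matched against the relative dynamical degree $\lambda$ above. A further, structural limitation --- already present for product maps --- is that the method only delivers the conjecture when each factor belongs to a class where it is known, so nothing is obtained, say, from factors of general type in dimension $\ge 3$ or from general Calabi--Yau factors with large automorphism groups; I would accordingly state the results with explicit hypotheses on the factors and, in the skew case, on the fibre maps.
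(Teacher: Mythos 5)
The statement you were asked to prove is Conjecture \ref{KS} itself, which the paper does not establish in general; the paper only proves it in special cases (Theorems \ref{main1}, \ref{main2}, \ref{main3}). You correctly recognized this and sketched a strategy for endomorphisms of product varieties. Your reduction in the split case $f = f_1 \times f_2$ matches the paper's in substance: the formula $\delta_{f_1 \times f_2} = \max\{\delta_{f_1},\delta_{f_2}\}$ is Corollary \ref{Lemma:dynamical degree} (obtained via the Dinh--Nguy\^{e}n relative dynamical degree formula, Theorem \ref{dynamical}, rather than by direct expansion of intersection numbers, but to the same effect), the additivity of heights together with the elementary growth-rate fact is Lemma \ref{limit}, and the assembly is Lemma \ref{prod}; your orbit-closure observation is used there implicitly.

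The genuine gap is in your treatment of an endomorphism $f \colon X \times Y \to X \times Y$ that is not literally a product. You flag the skew case $f(x,y) = (g(x), h_x(y))$ as the outstanding difficulty and propose a relative or family height theory to attack it. The paper does the opposite: it shows that under its hypotheses (first Betti number zero, $\Aut^\circ$ linear algebraic, or $\rank\NS(X\times Y) = \rank\NS(X)+\rank\NS(Y)$), the skew case cannot occur once one passes to a suitable iterate $f^t$. This is the content of Theorems \ref{Splitting1} and \ref{Splitting2}, whose proofs rest on (i) Lemma \ref{dominant2}, producing a $t$ for which $\pr_2 \circ f^t \circ i_x$ is a surjective self-map of $Y$ for every $x$; (ii) Horst's theorem, which places the holomorphic family $x \mapsto \pr_2 \circ f^t \circ i_x$ inside a single $\Aut^\circ(Y)$-orbit; and (iii) rigidity of holomorphic maps from a projective variety to the affine part of $\Aut^\circ(Y)$, forcing that family to be constant. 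One then needs Lemma \ref{iterate} to pass back from $f^t$ to $f$, a step your sketch omits. Without the splitting machinery your argument covers only product endomorphisms, which is far narrower than Theorem \ref{main1}; the whole point of Section \ref{Splitting of endomorphisms} is to eliminate the non-split case under the stated hypotheses, not to analyze it directly.
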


The existence of the limit defining the arithmetic degree
when $f$ is a dominant {\it endomorphism} (i.e., $f$ is defined everywhere) is proved in \cite{ab1}.
But in general, the convergence is not known.
It seems difficult to prove Conjecture \ref{KS} in full generality.

The aim of this paper is to give examples of endomorphisms
on product varieties and rational points on them
for which Conjecture \ref{KS} is true.

We prove Conjecture \ref{KS} in the following situations.

\begin{thm}\label{main1}
For $i=1,2,\ldots ,n$, let $X_i$ be a smooth projective variety
defined over a number field $k$.
Assume that each $X_i$ satisfies at least one of the following conditions:
\begin{itemize}
\item the first Betti number of $X_i(\bb{C})$ is zero
		and the N\'eron-Severi group of $X_i\otimes_k \var{k}$ has rank one,
\item $X_i$ is an abelian variety,
\item $X_i$ is an Enriques surface, or
\item $X_i$ is a $K3$ surface.
\end{itemize}
Then Conjecture \ref{KS} is true for any endomorphism
$f\colon \prod_{i=1}^n X_i \longrightarrow \prod_{i=1}^n X_i$
defined over $k$.
\end{thm}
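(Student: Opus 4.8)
The plan is to argue by induction on the number $n$ of factors. Two elementary reductions are available throughout. First, since each $X_i$, and hence $X=\prod_i X_i$, is irreducible, one may replace $f$ by any iterate $f^m$: this sends $(\delta_f,\alpha_f(P))$ to $(\delta_f^m,\alpha_f(P)^m)$, and if $\mathcal O_f(P)$ is Zariski dense then, writing $X=\bigcup_{j=0}^{m-1}\overline{\mathcal O_{f^m}(f^j(P))}$, some $\overline{\mathcal O_{f^m}(f^j(P))}=X$, so Conjecture~\ref{KS} for $(f^m,f^j(P))$ yields it for $(f^m,P)$ and hence for $(f,P)$, using that $\alpha_f$ is constant on $f$-orbits. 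Second, I will repeatedly use that for endomorphisms the limit $\alpha_f(P)$ exists (\cite{ab1}) and satisfies $1\le\alpha_f(P)\le\delta_f$, and that if $\pi\colon X\to Y$ is a surjective morphism of smooth projective varieties with $\pi\circ f=g\circ\pi$ then $\alpha_f(P)\ge\alpha_g(\pi(P))$; thus whenever $\pi(P)$ has Zariski dense $g$-orbit, $\delta_g=\delta_f$, and Conjecture~\ref{KS} is known for $(Y,g)$, we are finished. The known cases I would feed in are: Conjecture~\ref{KS} for all endomorphisms of abelian varieties (\cite{ab1}); for polarized endomorphisms, i.e. when some ample class $\eta$ satisfies $f^*\eta\equiv\delta_f\eta$ (Call--Silverman, Kawaguchi--Silverman), where it holds for every $P$ with infinite orbit; and for positive-entropy automorphisms of projective surfaces (Kawaguchi), which covers the $K3$ and Enriques factors when their dynamical degree exceeds $1$ (when it equals $1$ there is nothing to prove, since then $\delta_f=1$ and $1\le\alpha_f(P)\le\delta_f$).

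The core of the argument is a structural reduction. Group the factors into the abelian ones, with product $A$; the $K3$ and Enriques ones, with product $W$; and the ones with $b_1=0$ and Picard number one, with product $V$. The rigidity inputs are: (i) every morphism from a variety with vanishing first Betti number — in particular from any non-abelian factor, and from $V$ — to an abelian variety is constant, since it factors through the trivial Albanese; (ii) a surjective endomorphism of a $K3$ or Enriques surface is an automorphism (Riemann--Hurwitz forces it to be \'etale, and the degree is one, by the Euler characteristic computation, resp. by simple connectedness of the $K3$ cover), and the automorphism scheme of such a surface is $0$-dimensional because $H^0(S,T_S)=0$; (iii) a $K3$ or Enriques surface is not uniruled. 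Writing $f=(f_i)$ with $f_i=\pr_i\circ f$ surjective onto $X_i$: by (i), $\pr_A\circ f$ is constant along the fibres of $X\to A$, so $\pr_A\circ f=F\circ\pr_A$ for a surjective endomorphism $F$ of $A$; combining (ii), (iii) with the fact that the numerical data of the fibrewise restrictions of $f_i$ is locally constant on a connected base, after a further iterate one gets $\pr_W\circ f=h\circ\pr_W$ with $h=\prod_i g_i$, $g_i\in\Aut(X_i)$. Hence $\pi:=(\pr_A,\pr_W)\colon X\to B:=A\times W$ is $f$-equivariant for $F\times h$, and $f$ is a skew product over $(B,F\times h)$ with fibre $V$, all fibrewise maps inducing a single non-negative integer matrix $M$ on $N^1(V)_{\bb R}$. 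This step is the most delicate part: when a $K3$ or Enriques factor is swept out by rational or elliptic curves, morphisms into it from Fano or abelian factors need not be constant, and one must check that the offending components can still be routed through the appropriate block. I expect most of the technical work to lie here.

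Granting the reduction, the dynamical degree is computed by the product formula for maps preserving a fibration (Dinh--Nguyen--Truong): $\delta_f=\max\bigl(\delta_{F\times h},\rho(M)\bigr)=\max\bigl(\delta_F,\ \max_i\delta_{g_i},\ \rho(M)\bigr)$, the last equality because $N^1$ of a product of abelian and $b_1=0$ factors carries no cross terms and so splits compatibly with $(F\times h)^*$. If $\delta_f=\delta_F$: a Zariski dense $f$-orbit projects to a Zariski dense $F$-orbit in $A$, and Conjecture~\ref{KS} holds for abelian varieties, so $\alpha_f(P)\ge\delta_F=\delta_f$ and we are done. If $\delta_f=\delta_{g_{i_0}}$ for a $K3$ or Enriques factor $X_{i_0}$: pushing forward by $\pr_{i_0}$ and applying Kawaguchi's theorem to the positive-entropy automorphism $g_{i_0}$ (the case $\delta_{g_{i_0}}=1$ being trivial) again gives $\alpha_f(P)\ge\delta_f$. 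The remaining case, $\delta_f=\rho(M)>\delta_{F\times h}$ — the expansion concentrated in the fibre $V$, where no equivariant quotient obviously realizes $\delta_f$ — is the crux.

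In that case I would proceed as follows. By Perron--Frobenius, after permuting the $b_1=0$, $\rho=1$ factors and passing to an iterate, $M$ may be taken primitive, with a strictly positive eigenvector corresponding to an ample $\bb R$-divisor class $\eta$ on $V$ with $M\eta=\rho(M)\eta$; then $f^*(\pr_V^*\eta)\equiv\delta_f\,\pr_V^*\eta+\pr_B^*\gamma$ for some $\gamma\in N^1(B)_{\bb R}$. Since $\delta_{F\times h}<\delta_f$, the height of $\pr_B(f^n(P))=(F\times h)^n(\pr_B(P))$ grows strictly slower than $\delta_f^n$, so the $\gamma$-correction is negligible, and the telescoping limit $\widehat h(Q):=\lim_n\delta_f^{-n}h_{\pr_V^*\eta}(f^n(Q))$ exists, is non-negative (as $\pr_V^*\eta$ is nef), satisfies $\widehat h\circ f=\delta_f\widehat h$, and — once $\widehat h(P)>0$ — forces $h_H(f^n(P))\ge c\,\delta_f^n+O(1)$, hence $\alpha_f(P)\ge\delta_f$. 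The one point left, which I expect to be the genuine obstacle of the whole proof, is to show $\widehat h(P)>0$ whenever $\mathcal O_f(P)$ is Zariski dense. For this I would observe that $\widehat h$ restricts on every $(F\times h)$-periodic fibre to the canonical height of the induced polarized endomorphism of $V$ (to which Conjecture~\ref{KS} applies by the inductive hypothesis, the base case being a polarized endomorphism in the sense of Call--Silverman), and then argue that $\widehat h(P)=0$ would confine the orbit of $P$ to a proper Zariski-closed subset of $X$ — intuitively because vanishing of $\widehat h$ makes the base coordinate preperiodic and the fibre coordinate dynamically degenerate — contradicting Zariski density. Assembling the four cases completes the induction and the proof.
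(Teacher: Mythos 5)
Your plan diverges from the paper at a crucial structural point, and the two places you flag as ``delicate'' and ``the genuine obstacle'' are exactly where the paper uses a tool you do not invoke: Horst's theorem combined with the Fujiki--Lieberman fact that $\Aut^\circ(X)$ is a \emph{linear} algebraic group whenever $b_1(X)=0$. The paper does not settle for a skew product over $A\times W$ with fibre $V$; it proves (Theorem~\ref{Splitting1}) that if $b_1(X)=0$ then \emph{any} dominant endomorphism of $X\times Y$ becomes a genuine product $g\times h$ after passing to an iterate. The mechanism is: first obtain $f^t(x,y)=(g_1(x,y),h_1(y))$ as you do via the Albanese; then for each $y$ consider $g_1\circ j_y\in\Sur(X)$, observe by Horst's theorem that the image of $Y(\bb{C})$ in $\Sur(X)(\bb{C})$ lies in a single $\Aut^\circ(X)$-orbit, and conclude that the resulting map $\psi\colon Y(\bb{C})\to\Aut^\circ(X)(\bb{C})$ is constant because $Y$ is projective and $\Aut^\circ(X)$ is affine. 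This kills the $y$-dependence of $g_1$ entirely. Once $f^t$ is split, Conjecture~\ref{KS} for $f$ reduces to the factors by $\delta_{g\times h}=\max(\delta_g,\delta_h)$ (Corollary~\ref{Lemma:dynamical degree}), an elementary limit lemma (Lemma~\ref{limit}), and the iterate reduction (Lemma~\ref{iterate}). There is no residual fibre dynamics and no canonical height to control.

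Your version, by contrast, stops at the skew product and therefore has to confront the case $\delta_f=\rho(M)>\delta_{F\times h}$. The claim that $\widehat h(P)>0$ for Zariski dense orbits in a skew product over a base whose dynamical degree is strictly smaller is not established by the argument you sketch, and it is not a routine corollary of the polarized case: the fibrewise restriction of $f$ is polarized, but the global map need not be, and positivity of the telescoped height for non-periodic base points is precisely the kind of statement that is open for general skew products. Similarly, your ``routing'' step to obtain $\pr_W\circ f=h\circ\pr_W$ is not carried out, and non-uniruledness plus finiteness of $\Aut$ of a K3/Enriques surface do not by themselves preclude a nonconstant family of surjective self-maps parametrized by $V\times A$; Horst's rigidity is what rules that out. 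Both issues evaporate if you upgrade the skew product to a full product via Theorem~\ref{Splitting1} (and, for the product of an abelian variety with the $b_1=0$ block, the same theorem applies since $b_1$ of the non-abelian block vanishes). So the gap is real, but it is a gap of missing machinery rather than a wrong idea: with full splitting available, your case analysis collapses to the two easy cases you already handle correctly.
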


We also prove that, when one of the direct factors is of general type,
any endomorphism on the product variety does not admit Zariski dense forward orbit.
Thus, Conjecture \ref{KS} is obviously true for such endomorphisms.

\begin{thm}\label{main2}
Let $X$ and $Y$ be smooth projective varieties of dimension $\geq 1$
defined over a subfield $k\subset \bb{C}$,
and $f \colon X \times Y \longrightarrow X\times Y$ an endomorphism defined over $k$.
Assume that at least one of $X$ or $Y$ is of general type.
Then, for every $\var{k}$-rational point $P \in (X \times Y)(\var{k})$,
the forward $f$-orbit $\mathcal{O}_f(P)$ is not Zariski dense in $X\times Y$.
\end{thm}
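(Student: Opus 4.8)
The plan is to establish something slightly stronger: that some positive iterate $f^k$ satisfies $\pr_Y\circ f^k=\pr_Y$, where $\pr_Y\colon X\times Y\to Y$ is the second projection and, after relabelling the two factors, $Y$ is of general type. This says that $f^k$ carries every fibre $X\times\{y\}$ of $\pr_Y$ into itself, so that every $f$-orbit — being a finite union of $f^k$-orbits — is confined to finitely many such fibres and hence cannot be Zariski dense. Before starting, I would dispose of the case where $f$ is not dominant: then $\var{f(X\times Y)}$ is a proper closed subvariety and $\mathcal O_f(P)\subset\{P\}\cup\var{f(X\times Y)}$ has Zariski closure of dimension $<\dim(X\times Y)$, so there is nothing to prove. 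Hence I may assume $f$ dominant; since $X\times Y$ is projective, this forces $f$, and therefore every $f^n$, to be surjective. I would also base change all $k$-schemes to $\bb{C}$, which affects neither dominance of morphisms nor Zariski density of sets of $\var{k}$-points, so that $Y$ becomes a smooth projective complex variety of general type.

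The key step is a pigeonhole argument applied to the morphisms $\pr_Y\circ f^n\colon X\times Y\to Y$ for $n\ge 0$. Each of these is dominant, being a composite of dominant maps. By the classical finiteness theorem of Kobayashi and Ochiai, a projective variety of general type receives only finitely many dominant rational maps from any fixed variety; consequently the set $\{\pr_Y\circ f^n:n\ge 0\}$ is finite, and there exist integers $0\le n<m$ with $\pr_Y\circ f^m=\pr_Y\circ f^n$ (the two sides agree as rational maps on the reduced variety $X\times Y$, hence agree as morphisms). Putting $k:=m-n\ge 1$, this equation reads $(\pr_Y\circ f^k)\circ f^n=\pr_Y\circ f^n$, and since $f^n$ is surjective I may cancel it on the right to obtain $\pr_Y\circ f^k=\pr_Y$.

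To finish, for any $\var{k}$-rational point $P$ I would write $\mathcal O_f(P)=\bigcup_{i=0}^{k-1}\mathcal O_{f^k}\bigl(f^i(P)\bigr)$. Because $\pr_Y\circ f^k=\pr_Y$, the point $\pr_Y$ is constant along each $f^k$-orbit, so the $i$-th piece is contained in the fibre $X\times\{\pr_Y(f^i(P))\}$, which is a proper closed subvariety of $X\times Y$ since $\dim Y\ge 1$. Hence $\var{\mathcal O_f(P)}$ lies inside a finite union of fibres of $\pr_Y$ and cannot be all of $X\times Y$, which is the desired conclusion.

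I expect essentially all of the content to be concentrated in the invocation of the Kobayashi–Ochiai finiteness theorem; everything around it is formal. The two points that call for a little (routine) care are the cancellation of $f^n$ on the right — which is exactly where one uses that $f$ is a genuine morphism and that $X\times Y$ is projective, so that $f^n$ is surjective rather than merely dominant — and the harmless reduction to the complex-analytic setting that is needed in order to quote Kobayashi–Ochiai in its usual form.
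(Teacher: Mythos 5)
Your proof is correct, and it takes a genuinely different route from the paper's. The paper first invokes its Lemma \ref{Fibration} (whose proof goes through Horst's theorem on the $\Aut^\circ(Y)$-orbit structure of $\Sur(Y)$, Chevalley's structure theorem for algebraic groups, and the Albanese map) to obtain a power $f^t(x,y)=(g(x,y),h(y))$, then uses finiteness of $\Aut(Y)$ for $Y$ of general type to replace $t$ by $t\cdot|\Aut(Y)|$ and force $h=\id_Y$. You instead apply the Kobayashi--Ochiai finiteness theorem directly to the dominant maps $\pr_Y\circ f^n\colon X\times Y\to Y$, get a coincidence $\pr_Y\circ f^m=\pr_Y\circ f^n$ by pigeonhole, and cancel the surjective $f^n$ to obtain $\pr_Y\circ f^k=\pr_Y$. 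Both arrive at the same intermediate statement --- some iterate of $f$ fixes the $Y$-coordinate --- and finish identically. Your argument is shorter and more self-contained for this particular theorem, since Kobayashi--Ochiai is exactly tuned to general type and you need no structure theory of automorphism group schemes; the paper's Fibration lemma is doing more work because it is also reused (with weaker hypotheses, e.g.\ $b_1(X)=0$) in the proof of Theorem \ref{main1}, where Kobayashi--Ochiai would not apply. You also explicitly dispose of the non-dominant case, which the paper leaves implicit (its Lemma \ref{Fibration} presupposes dominance); this is a harmless but welcome bit of care.
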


\begin{rmk}\label{results}
Kawaguchi and Silverman proved Conjecture \ref{KS} in the following cases
(for details, see \cite{ab1}, \cite{eg}, \cite{Gm}, \cite{ab2}).
\begin{itemize} 
\item (\cite[Theorem 2 (a)]{eg}) $f$ is an endomorphism and
		the N\'eron-Severi group of $X\otimes_k \var{k}$ has rank one.
\item (\cite[Theorem 2 (b)]{eg}) $f$ is the extension to $\mathbb{P} ^N$
		of a regular affine automorphism on $\mathbb{A} ^N$.
\item (\cite[Theorem 2 (c)]{eg}) $X$ is a smooth projective surface
		and $f$ is an automorphism on $X$.
\item (\cite[Proposition 19]{Gm}) $f$ is the extension to $\mathbb{P}^N$
		of a monomial endomorphism on $\mathbb{G}_m^N$
		and $P\in \mathbb{G} _m^N(\overline{k})$.
\item (\cite[Corollary 32]{ab1}, \cite[Theorem 2]{ab2}) $X$ is an abelian variety.
		Note that any rational map between abelian varieties is automatically a morphism,
		and can be written as the composition of
		a homomorphism of abelian varieties and the translation by a point.
\end{itemize}
\end{rmk}

\subsection*{Notation}
	The base field $k$ is always a number field
		or a subfield of $\bb{C}$.
		A variety defined over $k$ means a scheme
		of finite type over $\Spec k$ which is geometrically integral.
	An endomorphism on a variety $X$ means a morphism from $X$ to itself.
	For a smooth projective variety defined over a number field $k$,
		$b_1(X):=\dim_\bb{Q} H^1(X(\bb{C}),\bb{Q})$ denotes the first Betti number
		of the complex manifold $X(\bb{C})$, and
	$\NS(X)$ denotes the N\'eron-Severi group of $X\otimes_k \var{k}.$
	It is well-known that $b_1(X)$ does not depend on
	the choice of an embedding $k\hookrightarrow \bb{C}$, and
	$\NS(X)$ is a finitely generated abelian group.
	We put $\NS(X)_\bb{R} := \NS(X)\otimes_\bb{Z} \bb{R}.$
	The Albanese variety of $X$ is denoted by $\Alb(X)$.

\subsection*{Outline of this paper}
In Section \ref{recall}, we recall the definitions and some properties
of dynamical and arithmetic degrees.
In Section \ref{reduction}, we prove some reduction lemmata for Conjecture \ref{KS}.
In Section \ref{Splitting of endomorphisms},
we study some sufficient conditions for endomorphisms
on product varieties to be split.
These are important to prove Theorem \ref{main1} and Theorem \ref{main2}.
Theorem \ref{main1} is proved in Section \ref{cor1}, and
Theorem \ref{main2} is proved in Section \ref{cor2}.

\section{Dynamical degree and Arithmetic degree}\label{recall}
Let $H$ be an ample divisor on a smooth projective variety $X$
defined over a number field $k$.
The (first) {\it dynamical degree} of a dominant rational self-map $f\colon X \dashrightarrow X$
is defined by
$$\delta_f :=\lim_{n\to \infty} \deg(((f^n)^\ast H)\cdot H^{\dim X-1})^{1/n}.$$
The limit defining $\delta_f$ exists,
and $\delta _f$ does not depend on the choice of $H$ (see \cite[Corollary 7]{dinh}, \cite[Proposition 1.2]{guedj}).
Note that if $f$ is an endomorphism, we have $(f^n)^{\ast}=(f^\ast )^n$
as a linear self-map on $\NS (X)$.
But if $f$ is merely a rational self-map,
we have $(f^n)^{\ast}\neq (f^\ast )^n$ in general.

\begin{rmk}[{\cite[Proposition 1.2 (iii)]{guedj}, \cite[Remark 7]{rat}}] \label{n-th power of delta}
Let $\rho (f ^\ast)$ be the spectral radius of the linear self-map
$f^\ast \colon \NS (X)_\mathbb{R} \longrightarrow \NS (X)_\mathbb{R}.$
The dynamical degree $\delta_f$ is equal to the limit
$\lim_{n\to \infty} (\rho ((f ^n)^\ast ))^{1/n}.$
Thus we have $\delta_{f^n}=\delta_f^n$ for every $n\geq 1$.
\end{rmk}

Let $X_f(\var{k})$ be the set of $\var{k}$-rational points on $X$
at which $f^n$ is defined for every $n \geq 1.$
The {\it arithmetic degree} of $f$ at a $\overline{k}$-rational point $P\in X_f(\overline{k})$
is defined as follows.
Let $$h_H\colon X(\overline{k})\longrightarrow [0,\infty )$$
be an (absolute logarithmic) Weil height function associated with $H$
(see \cite[Theorem B3.2]{HS}).
We put
$$h_H^+(P):=\max\left\{ h_H(P),1\right\}.$$
We call
\begin{align*}
\overline{\alpha} _f(P)&:=\limsup_{n\to \infty} h_H^+(f^n(P))^{1/n},\text{ and}\\
\underline{\alpha} _f(P)&:=\liminf_{n\to \infty} h_H^+(f^n(P))^{1/n},\\
\end{align*}
{\it the upper arithmetic degree} and {\it the lower arithmetic degree}, respectively.
It is known that $\overline{\alpha}_f(P)$ and $\underline{\alpha}_f(P)$
do not depend on the choice of $H$ (see \cite[Proposition 12]{rat}).
If $\var{\alpha}_f(P)=\underline{\alpha}_f(P)$, the limit
$$\alpha _f(P):=\lim_{n\to \infty} h_H^+(f^n(P))^{1/n}$$
is called {\it the arithmetic degree of} $f$ {\it at} $P$.

\begin{rmk}\label{convergence}
When $f$ is an endomorphism, the existence of the limit
defining the arithmetic degree $\alpha_f(P)$ is proved
by Kawaguchi and Silverman in \cite[Theorem 3]{ab1}.
But it is not known in general.
\end{rmk}

\begin{rmk}\label{upperineq}
The inequality $\overline{\alpha}_f(P)\leq \delta_f$
is proved in \cite[Theorem 4]{rat} and \cite[Theorem 1.4]{Matsuzawa}.
Hence, in order to prove Conjecture \ref{KS}, it is enough
to prove the opposite inequality $\underline{\alpha}_f(P)\geq \delta_f$.
\end{rmk}

We recall the following result on relative dynamical degrees
proved by T.-C. Dinh and V.-A. Nguy\^en.

\begin{thm}[{\cite[Theorem 1.1]{dinh2}}]\label{dynamical}
Let $X$ and $Y$ be smooth projective varieties
defined over $\bb{C}$, with $\dim X \geq \dim Y$.
Let $f\colon X\dashrightarrow X,$ $g\colon Y\dashrightarrow Y$ and
$\pi \colon X \dashrightarrow Y$ be dominant rational maps
such that $\pi\circ f=g\circ \pi$.
Then we have
$$d_p(f)=\max_{\max\{0,p-\dim X+\dim Y \}\leq j\leq \min\{ p,\dim Y\}}d_j(g)d_{p-j}(f|_{\pi})$$
for every $0\leq p\leq \dim X.$
\end{thm}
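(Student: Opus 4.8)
The plan is to establish the stronger inequality $\underline{\alpha}_f(P)\geq\delta_f$ for a Zariski dense set of points $P\in\bb{A}^N(\var{k})$ with pairwise disjoint forward orbits (note that for such $P$ all iterates $f^n(P)$ are defined, since $f|_{\bb{A}^N}$ is a morphism); together with $\overline{\alpha}_f(P)\leq\delta_f$ from Remark~\ref{upperineq} this forces $\alpha_f(P)$ to be defined and equal to $\delta_f$. Write $d_i:=\deg_{x_i}f_i$. The Jacobian matrix of $f$ is triangular with diagonal entries $\partial f_i/\partial x_i$, so dominance of $f$ forces $d_i\geq 1$ for all $i$. Let $g\colon\bb{A}^{N-1}\dashrightarrow\bb{A}^{N-1}$ be the truncation $(x_2,\dots,x_N)\mapsto(f_2,\dots,f_N)$, again of the given triangular form and regarded as a dominant rational self-map of $\bb{P}^{N-1}$. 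The linear projection $\pi\colon\bb{P}^N\dashrightarrow\bb{P}^{N-1}$ forgetting the first coordinate satisfies $\pi\circ f=g\circ\pi$, has general fibre $\bb{P}^1$, and the map induced by $f$ on a general fibre is $t\mapsto f_1(t,c_2,\dots,c_N)$, a polynomial of degree $d_1$ in $t$; hence $d_1(f|_\pi)=d_1$, and Theorem~\ref{dynamical} with $p=1$ gives $\delta_f=\max\{\delta_g,d_1\}$. Iterating, $\delta_f=\max_{1\leq i\leq N}d_i$.

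Next I would record the mechanism producing a Zariski dense set with disjoint orbits — this is the purpose of the reduction lemmas of Section~\ref{reduction}, and it exploits that the number field $\var{k}$, hence $\bb{A}^N(\var{k})$ and the set of basic Zariski open subsets $D(\phi):=\bb{A}^N\setminus V(\phi)$ ($\phi$ a nonzero polynomial over $\var{k}$), is countable; enumerate the latter as $D(\phi_1),D(\phi_2),\dots$. If $Q\neq Q'$ in $\bb{A}^{N-1}(\var{k})$ have disjoint $g$-orbits, then any two points of $\bb{A}^N$ lying over $Q$ and $Q'$ have disjoint $f$-orbits, by applying $\pi$. Moreover, if $S'\subset\bb{A}^{N-1}(\var{k})$ is Zariski dense then so is $\bb{A}^1\times S'$ in $\bb{A}^N$, whence $(\bb{A}^1\times S')\cap D(\phi_j)$ is dense in $D(\phi_j)$ and in particular is not contained in finitely many fibres of $\pi$. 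Therefore, if $S'$ is moreover chosen with pairwise disjoint $g$-orbits, one can select inductively distinct $Q_j\in S'$ whose fibres meet $D(\phi_j)$, and then — a nonempty Zariski open subset of $\bb{A}^1$ omits only finitely many $\var{k}$-points, while $\bb{A}^1(\var{k})$ contains points of arbitrarily large height — pick $a_{1,j}\in\var{k}$ with $(a_{1,j},Q_j)\in D(\phi_j)$ and $h(a_{1,j})$ as large as required; the set $\{(a_{1,j},Q_j)\}_j$ is then Zariski dense with pairwise disjoint $f$-orbits. The first hypothesis of the theorem is handled by induction on $N$: one builds such an $S'$ for $g$ and applies this packaging.

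For the height estimate under the first hypothesis, $\delta_f=d_1$ and $\delta_g=\max_{i\geq 2}d_i<d_1$. Let $\ell_0\in k[x_2,\dots,x_N]$ be the leading coefficient of $f_1$ with respect to $x_1$; strengthening the inductive statement on $g$ to allow finitely many forbidden hypersurfaces, arrange that the base points $Q$ also satisfy $\mathcal{O}_g(Q)\cap V(\ell_0)=\emptyset$. Writing $f^n(a_1,Q)=(a_1^{(n)},g^n(Q))$ one has $a_1^{(n+1)}=F_n(a_1^{(n)})$ with $F_n(t)=f_1(t,g^n(Q))$ of degree exactly $d_1$ in $t$ — this is where $\mathcal{O}_g(Q)\cap V(\ell_0)=\emptyset$ enters — so the standard lower bound for heights under one-variable polynomial maps provides a constant $C$, depending only on $f_1$, with $h(F_n(t))\geq d_1\,h(t)-C(1+h(g^n(Q)))$ for all $t\in\var{k}$. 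Since $\overline{\alpha}_g(Q)\leq\delta_g$ by Remark~\ref{upperineq}, we have $h(g^n(Q))\leq C'(\delta_g+\epsilon)^n$ for a fixed $\epsilon>0$ with $\delta_g+\epsilon<d_1$; and since $d_1\geq 2$ in the inductive step ($d_1>d_2>\dots>d_N\geq 1$ with $N\geq 2$), iterating the inequality for $h(F_n(t))$ and summing the resulting geometric-type series gives $h(a_1^{(n)})\geq d_1^{\,n}(h(a_1)-B)$ for a constant $B=B(Q)$. Thus whenever $h(a_1)>B$, using that the Weil height on $\bb{P}^N$ dominates the height of the first coordinate, $h(f^n(a_1,Q))\geq h(a_1^{(n)})$ grows at least like $d_1^{\,n}$, so $\underline{\alpha}_f((a_1,Q))\geq d_1=\delta_f$. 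The base case $N=1$ is classical: if $d_1\geq 2$, then $\bb{A}^1(\var{k})\setminus\Preper(f)$ is Zariski dense and every point of it has $\alpha_f=d_1$ by the theory of canonical heights on $\bb{P}^1$, while if $d_1=1$ then $\delta_f=1$ and $\alpha_f(P)=1$ for every $P$. Finally, when $N=2$ and $d_1>d_2$ the argument above applies with $g=f_2\colon\bb{A}^1\to\bb{A}^1$, whereas when $N=2$ and $d_2\geq d_1$ one has $\delta_f=d_2$, and for $d_2\geq 2$ one takes $a_2\in\var{k}\setminus\Preper(f_2)$, so that $h(f_2^n(a_2))=\widehat{h}_{f_2}(a_2)\,d_2^{\,n}+O(1)$ with $\widehat{h}_{f_2}(a_2)>0$, whence $h(f^n(a_1,a_2))\geq h(f_2^n(a_2))$ yields $\underline{\alpha}_f((a_1,a_2))\geq d_2=\delta_f$ for every $a_1$ (the subcase $d_2=1$ gives $\delta_f=1$ and is trivial).

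The step I expect to be the main obstacle is closing the induction: one must check that strengthening the inductive statement to forbid finitely many hypersurfaces does no harm — that is, for a dominant triangular map on $\bb{A}^M$ the $\var{k}$-points whose forward orbit meets a fixed proper closed subset never exhaust the Zariski dense, disjoint-orbit families of good points being constructed. This is exactly what is needed above to exclude degree drops of the one-variable maps $F_n$. At the bottom level $M=1$ it reduces to the concrete assertion that, for a polynomial self-map of $\bb{A}^1$, the set of $\var{k}$-points whose orbit meets a prescribed finite set has Zariski dense complement — visible from the canonical-height description of backward orbits when the degree is at least $2$, and directly for affine maps. Propagating this up the tower while simultaneously preserving Zariski density and disjointness of orbits is the technical heart of the argument.
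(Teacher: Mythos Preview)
Your proposal does not address the stated theorem. Theorem~\ref{dynamical} is the Dinh--Nguy\^en formula for dynamical degrees under a semiconjugacy, quoted verbatim from \cite[Theorem~1.1]{dinh2}; the paper gives no proof of it and simply invokes it as a black box (in Corollary~\ref{Lemma:dynamical degree}). What you have written is instead a sketch of a proof of Theorem~\ref{main3} --- you even \emph{use} Theorem~\ref{dynamical} as an input in your first paragraph to compute $\delta_f=\max\{\delta_g,d_1\}$. So as a proof of the statement you were asked about, the proposal is vacuous: none of the height estimates, disjoint-orbit constructions, or inductive packaging has anything to do with the identity $d_p(f)=\max_j d_j(g)\,d_{p-j}(f|_\pi)$.

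If the intent was actually to prove Theorem~\ref{main3}, your approach is genuinely different from the paper's. The paper works $\mathfrak p$-adically: it fixes a place $\mathfrak p$ at which all coefficients are units, carves out an explicit $\mathfrak p$-adic open set $U\subset\bb{A}^N(\overline{k})$ (defined by inequalities of the form $|x_i|_\mathfrak p>|x_{i+1}|_\mathfrak p^C>1$), shows $U$ is $f$-stable, and reads off $|x_1^{(n)}|_\mathfrak p\geq |x_1|_\mathfrak p^{d_1^{\,n}}$ directly from the ultrametric inequality; this immediately gives $\underline{\widehat h}_{f,H}(P)>0$ on all of $U$, and Zariski density plus disjoint orbits come for free from the openness of $U\setminus f(U)$. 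There is no induction on $N$, no need to control where orbits of the truncated map land, and no issue with leading coefficients vanishing along an orbit. Your archimedean/inductive route can be made to work, but the step you yourself flag --- strengthening the inductive hypothesis so that the constructed points for $g$ have orbits avoiding $V(\ell_0)$ --- is a real burden that the paper's $\mathfrak p$-adic argument sidesteps entirely.
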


Here, $d_p(f)$ and $d_{p}(f|_{\pi})$ are the {\it $p$-th dynamical degree}
and the {\it $p$-th relative dynamical degree}, respectively, defined in \cite[Section 3]{dinh2}.

\begin{cor}\label{Lemma:dynamical degree}
Let $k$ be a subfield of $\bb{C}$. Let $f\colon X \dashrightarrow X$
and $g\colon Y \dashrightarrow Y$
be dominant rational self-maps on smooth projective varieties
of dimension $\geq 1$ defined over $k$.
Let $f \times g \colon X \times Y \dashrightarrow X\times Y$ be the product of $f$ and $g$.
Then we have $\delta _{f \times g}=\max \left\{ \delta_f, \delta_g \right\}$.
\end{cor}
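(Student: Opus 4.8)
The plan is to deduce Corollary \ref{Lemma:dynamical degree} from Theorem \ref{dynamical} by taking $\pi$ to be the first projection $\pr_1 \colon X \times Y \longrightarrow X$ and applying the theorem with $p = 1$. Set $Z := X \times Y$, let $F := f \times g \colon Z \dashrightarrow Z$, and note that $\pr_1 \circ F = f \circ \pr_1$, so the hypothesis $\pi \circ F = f \circ \pi$ of Theorem \ref{dynamical} is satisfied with the roles $(X, Y, f, g, \pi)$ of the theorem played by $(Z, X, F, f, \pr_1)$. Here I need $\dim Z \geq \dim X$, which holds since $\dim Y \geq 1$. The first dynamical degree $\delta_F$ is by definition $d_1(F)$ in the notation of \cite{dinh2} (with $d_0 = 1$ always), and similarly $\delta_f = d_1(f)$.

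Applying Theorem \ref{dynamical} with $p = 1$, the index $j$ ranges over $\max\{0, 1 - \dim Z + \dim X\} \leq j \leq \min\{1, \dim X\}$; since $\dim Z - \dim X = \dim Y \geq 1$, the lower bound is $0$, and since $\dim X \geq 1$, the upper bound is $1$, so $j \in \{0, 1\}$. This gives
\[
\delta_F = d_1(F) = \max\bigl\{ d_0(f)\, d_1(F|_{\pr_1}),\ d_1(f)\, d_0(F|_{\pr_1}) \bigr\} = \max\bigl\{ d_1(F|_{\pr_1}),\ \delta_f \bigr\},
\]
using $d_0(f) = d_0(F|_{\pr_1}) = 1$. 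It remains to identify the relative dynamical degree $d_1(F|_{\pr_1})$. Since $F = f \times g$ is a product and $\pr_1$ is the projection, the generic fiber of $\pr_1$ is a copy of $Y$ on which $F$ restricts to $g$; the relative dynamical degrees of $F$ over $\pr_1$ should therefore coincide with the (absolute) dynamical degrees of $g$ on $Y$, so in particular $d_1(F|_{\pr_1}) = d_1(g) = \delta_g$. Plugging this in yields $\delta_{f \times g} = \max\{\delta_g, \delta_f\}$, as desired.

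The main obstacle is justifying the identification $d_1(F|_{\pr_1}) = \delta_g$ rigorously from the definition of relative dynamical degrees in \cite[Section 3]{dinh2}. For a fibration $\pi \colon X \dashrightarrow Y$, the relative dynamical degree $d_p(f|_\pi)$ is defined via intersection numbers of pullbacks of an ample class on $X$ against a pullback of a suitable power of an ample class on $Y$, restricted to generic fibers; when the fibration is literally a product projection and $F = f \times g$ acts as $g$ on each fiber $\{x\} \times Y$, one expects this to reduce cleanly to the definition of $\delta_g$. I would make this precise either by citing the product formula for relative dynamical degrees (e.g., the multiplicativity properties established in \cite{dinh2}) or by a direct computation with the Künneth decomposition: writing $H_Z = \pr_1^* H_X + \pr_2^* H_Y$ for ample divisors $H_X, H_Y$ and expanding $((F^n)^* H_Z) \cdot H_Z^{\dim Z - 1}$ via the binomial theorem, the dominant term as $n \to \infty$ picks out exactly $\max\{\delta_f^n, \delta_g^n\}$ up to polynomial factors, since $(f^n)^* H_X$ grows like $\delta_f^n$ and $(g^n)^* H_Y$ like $\delta_g^n$ in the relevant intersection numbers and the cross terms are controlled by the same rates. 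Either route is routine once set up; the only real care needed is bookkeeping the fiber-dimension constraints in the index range of Theorem \ref{dynamical} and confirming $d_0 \equiv 1$.
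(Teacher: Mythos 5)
Your proposal takes essentially the same route as the paper: apply Theorem \ref{dynamical} with $p=1$ to one of the two projections (the paper uses $\pr_2$, you use $\pr_1$ — symmetric), and identify the relative dynamical degree with the dynamical degree of the other factor, which the paper likewise just asserts with a citation to \cite[Section 3]{dinh2}. The one small step you omit is the paper's opening reduction to $k=\bb{C}$ (Theorem \ref{dynamical} is stated over $\bb{C}$, while the corollary allows $k$ a subfield of $\bb{C}$), justified by the fact that dynamical degrees are unchanged under extension of the base field.
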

\begin{proof}
Since the (first) dynamical degrees do not change
when the base field $k$ is extended,
we may assume $k=\bb{C}$.
We apply Theorem \ref{dynamical} for $X\times Y$, $Y$, $f\times g$, $g$, $\pr_2$ and $p=1$.
We have $d_1(g)=\delta_g$, $d_0(g)=1$, $d_1((f\times g)|_{\pr_2})=\delta_f$,
$d_0((f\times g) |_{\pr_2})=1$, and $d_1(f\times g)=\delta_{f\times g}$ (see \cite[Section 3]{dinh2}).
Hence we get
\begin{align*}
\delta _{f\times g}
&= d_1(f\times g)\\
&= \max \{ d_1(g)d_0((f \times g)|_{\pr_2}), d_0(g)d_1((f \times g)|_{\pr_2})\} \\
&= \max \{ \delta_f, \delta_g \}.
\end{align*}
\end{proof}

\section{Some reductions of the Kawaguchi-Silverman conjecture.}\label{reduction}
In this section, we prove lemmata
which are useful to prove some cases of Conjecture \ref{KS}.

\begin{lem}\label{limit}
Let $\{a_n \}$ and $\{b_n\}$ be sequences of positive real numbers.
Assume that the limits $\lim_{n\to\infty}a_n^{1/n}$ and
$\lim_{n\to\infty}b_n^{1/n}$ exist and are not less than $1$.
Then the limit $\lim_{n\to\infty} (a_n+b_n)^{1/n}$ exists and
is equal to $\max \{\lim_{n\to\infty}a_n^{1/n}, \lim_{n\to\infty} b_n^{1/n}\}$.
\end{lem}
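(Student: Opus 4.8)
The plan is to show the two inequalities $\limsup_{n\to\infty}(a_n+b_n)^{1/n} \le \max\{\alpha,\beta\}$ and $\liminf_{n\to\infty}(a_n+b_n)^{1/n} \ge \max\{\alpha,\beta\}$, where I write $\alpha := \lim_{n\to\infty} a_n^{1/n}$ and $\beta := \lim_{n\to\infty} b_n^{1/n}$, both of which exist and satisfy $\alpha,\beta \ge 1$ by hypothesis. Together these force the limit to exist and equal $\max\{\alpha,\beta\}$.

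For the lower bound, I would use the trivial estimate $a_n + b_n \ge \max\{a_n, b_n\}$, so $(a_n+b_n)^{1/n} \ge \max\{a_n^{1/n}, b_n^{1/n}\}$. Taking $\liminf$ and using that both $a_n^{1/n}$ and $b_n^{1/n}$ converge, one gets $\liminf_{n\to\infty}(a_n+b_n)^{1/n} \ge \max\{\alpha,\beta\}$; here I would just note that $\liminf$ of a max of two convergent sequences is the max of the limits. For the upper bound, fix $\varepsilon > 0$. Since $a_n^{1/n}\to\alpha$ and $b_n^{1/n}\to\beta$, for all large $n$ we have $a_n \le (\alpha+\varepsilon)^n$ and $b_n \le (\beta+\varepsilon)^n$, hence $a_n + b_n \le 2(\max\{\alpha,\beta\}+\varepsilon)^n$, so $(a_n+b_n)^{1/n} \le 2^{1/n}(\max\{\alpha,\beta\}+\varepsilon)$. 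Letting $n\to\infty$ gives $\limsup_{n\to\infty}(a_n+b_n)^{1/n} \le \max\{\alpha,\beta\}+\varepsilon$, and since $\varepsilon$ is arbitrary the desired bound follows.

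Combining, $\max\{\alpha,\beta\} \le \liminf \le \limsup \le \max\{\alpha,\beta\}$, so the limit exists and equals $\max\{\alpha,\beta\}$. There is no real obstacle here: the only mild subtlety is making sure the hypothesis $\alpha,\beta\ge 1$ (equivalently, that the sequences do not decay exponentially) is actually not needed for this particular statement — the argument above works for any nonnegative values of $\alpha,\beta$ — but the hypothesis is presumably included because it is what holds in the intended application (heights bounded below by $1$ via $h_H^+$), and it does no harm to retain it. I would simply carry out the two estimates cleanly and conclude.

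\begin{proof}
Put $\alpha := \lim_{n\to\infty} a_n^{1/n}$ and $\beta := \lim_{n\to\infty} b_n^{1/n}$, which exist by assumption. Since $a_n + b_n \geq \max\{a_n, b_n\}$ for every $n$, we have $(a_n + b_n)^{1/n} \geq \max\{a_n^{1/n}, b_n^{1/n}\}$, and taking $\liminf$ as $n\to\infty$ yields
$$\liminf_{n\to\infty}(a_n+b_n)^{1/n} \geq \max\{\alpha,\beta\}.$$
Conversely, fix $\varepsilon > 0$. There is $n_0$ such that $a_n^{1/n} \leq \alpha + \varepsilon$ and $b_n^{1/n} \leq \beta + \varepsilon$ for all $n \geq n_0$, hence for such $n$,
$$a_n + b_n \leq (\alpha+\varepsilon)^n + (\beta+\varepsilon)^n \leq 2\bigl(\max\{\alpha,\beta\}+\varepsilon\bigr)^n,$$
so $(a_n+b_n)^{1/n} \leq 2^{1/n}\bigl(\max\{\alpha,\beta\}+\varepsilon\bigr)$. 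Letting $n\to\infty$ gives $\limsup_{n\to\infty}(a_n+b_n)^{1/n} \leq \max\{\alpha,\beta\}+\varepsilon$, and since $\varepsilon > 0$ is arbitrary,
$$\limsup_{n\to\infty}(a_n+b_n)^{1/n} \leq \max\{\alpha,\beta\}.$$
Combining the two displayed inequalities, the limit $\lim_{n\to\infty}(a_n+b_n)^{1/n}$ exists and equals $\max\{\alpha,\beta\}$.
\end{proof}
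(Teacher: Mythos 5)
Your proof is correct, and it takes a genuinely different (and in fact cleaner) route than the paper. The paper argues by cases on how $\alpha$ and $\beta$ compare: when $\alpha>\beta$ (or symmetrically $\alpha<\beta$) it sandwiches $a_n^{1/n}\le (a_n+b_n)^{1/n}\le (2a_n)^{1/n}$ for large $n$, and when $\alpha=\beta$ it factors $(a_n+b_n)^{1/n}=a_n^{1/n}(1+b_n/a_n)^{1/n}$ to reduce to the normalized case $a_n\equiv 1$, $\beta=1$, then does an $\varepsilon$-estimate there. Your argument replaces the case split and the multiplicative reduction with a single uniform two-sided estimate: the pointwise bound $\max\{a_n,b_n\}\le a_n+b_n\le 2\max\{a_n,b_n\}$ (the latter implicit in your $\le 2(\max\{\alpha,\beta\}+\varepsilon)^n$ step), handled via $\liminf$/$\limsup$. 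The payoff is that no case distinction is needed, and, as you observe, the hypothesis $\alpha,\beta\ge 1$ becomes superfluous — the argument works for any $\alpha,\beta\ge 0$. One small point worth stating explicitly for full rigor: in the lower bound you silently use that $\max\{a_n^{1/n},b_n^{1/n}\}\to\max\{\alpha,\beta\}$ because $\max$ is continuous and both inner sequences converge; this is what justifies passing from the pointwise inequality to $\liminf(a_n+b_n)^{1/n}\ge\max\{\alpha,\beta\}$.
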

\begin{proof}
We put $\alpha:=\lim_{n\to\infty}a_n^{1/n}$ and $\beta:=\lim_{n\to\infty} b_n^{1/n}$.
If $\alpha > \beta$, we have
$$
a_n^{1/n} \leq (a_n+b_n)^{1/n} \leq (2a_n)^{1/n}
$$
for all sufficiently large $n$.
Hence the limit $\lim_{n\to\infty} (a_n+b_n)^{1/n}$ exists and is equal to $\alpha$.
The proof for the case $\alpha<\beta$ is similar.

If $\alpha=\beta$, since we have
$\lim_{n\to\infty} (a_n+b_n)^{1/n}=\alpha\cdot\lim_{n\to\infty}(1+b_n/a_n)^{1/n},$
it is enough to prove the assertion
when $a_n=1$ for all $n$ and $\beta=1$.
Fix a real number $0<\varepsilon<1.$
There exists an integer $n_0$
such that $b_n\leq (1+\varepsilon)^n$ holds for all $n\geq n_0$.
Then we have
$1\leq (1+b_n)^{1/n}\leq (1+(1+\varepsilon)^n)^{1/n}
\leq (2\cdot(1+\varepsilon)^n)^{1/n}=2^{1/n}(1+\varepsilon ).$
Hence we get
$1\leq \limsup_{n\to\infty} (1+b_n)^{1/n}\leq 1+\varepsilon$
and
$1\leq \liminf_{n\to\infty} (1+b_n)^{1/n}\leq 1+\varepsilon$.
Since the real number $0<\varepsilon <1$ is arbitrary, the limit
$\lim_{n\to\infty} (1+b_n)^{1/n}$ exists
and the assertion follows.
\end{proof}

\begin{lem}\label{prod} 
Let $X$ and $Y$ be smooth projective varieties defined over a number field $k$.
Let $f \colon X \longrightarrow X $ and $g\colon Y \longrightarrow Y$
be dominant endomorphisms defined over $k$, respectively.
Assume that Conjecture \ref{KS} is true for $f$ and $g$.
Then Conjecture \ref{KS} is true for the product endomorphism
$f\times g\colon X\times Y\longrightarrow X\times Y$.
\end{lem}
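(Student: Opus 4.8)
The plan is to split everything across the two factors, using the additivity of Weil heights on a product together with Lemma \ref{limit}, and then to read off the dynamical degree of the product from Corollary \ref{Lemma:dynamical degree}.

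\emph{Reduction to the factors.} Fix ample divisors $H_X$ on $X$ and $H_Y$ on $Y$, and set $H:=\pr_1^\ast H_X+\pr_2^\ast H_Y$, which is ample on $X\times Y$. Since the upper and lower arithmetic degrees do not depend on the choice of ample divisor, we may compute $\var{\alpha}_{f\times g}$ and $\underline{\alpha}_{f\times g}$ with respect to $H$. By functoriality and additivity of Weil heights (\cite[Theorem B3.2]{HS}) there is a constant $C_0$ with $|h_H(Q',R')-h_{H_X}(Q')-h_{H_Y}(R')|\le C_0$ for all $(Q',R')\in (X\times Y)(\var{k})$; passing to $h_H^+$, $h_{H_X}^+$, $h_{H_Y}^+$ changes each summand by at most $1$, so there is a constant $C$ with
$$\bigl| h_H^+(Q',R') - \bigl(h_{H_X}^+(Q')+h_{H_Y}^+(R')\bigr)\bigr|\le C\qquad\text{for all }(Q',R').$$

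\emph{The arithmetic degree of $f\times g$ exists and splits.} Write $P=(Q,R)$, so that $(f\times g)^n(P)=(f^n(Q),g^n(R))$. Put $a_n:=h_{H_X}^+(f^n(Q))$, $b_n:=h_{H_Y}^+(g^n(R))$ and $v_n:=a_n+b_n$; each of $a_n,b_n$ is $\ge 1$. By hypothesis Conjecture \ref{KS} holds for the endomorphisms $f$ and $g$, so $\alpha_f(Q)=\lim_n a_n^{1/n}$ and $\alpha_g(R)=\lim_n b_n^{1/n}$ exist and are $\ge 1$; by Lemma \ref{limit}, $\lim_n v_n^{1/n}=\max\{\alpha_f(Q),\alpha_g(R)\}$. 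Let $u_n:=h_H^+((f\times g)^n(P))$. Then $u_n,v_n\ge 1$ and $|u_n-v_n|\le C$ by the displayed inequality, whence the elementary bound $(1+C)^{-1}v_n\le u_n\le (1+C)v_n$ (using $u_n,v_n\ge 1$). Therefore $u_n^{1/n}$ and $v_n^{1/n}$ have the same limit, so $\alpha_{f\times g}(P)$ is defined and
$$\alpha_{f\times g}(P)=\max\{\alpha_f(Q),\alpha_g(R)\}.$$

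\emph{The Zariski dense case.} Suppose $\mathcal{O}_{f\times g}(P)$ is Zariski dense in $X\times Y$. Since $\pr_1((f\times g)^n(P))=f^n(Q)$, if $\mathcal{O}_f(Q)$ were contained in a proper closed subset $Z\subsetneq X$ we would get $\mathcal{O}_{f\times g}(P)\subseteq Z\times Y\subsetneq X\times Y$, contradicting density; hence $\mathcal{O}_f(Q)$ is Zariski dense in $X$, and likewise $\mathcal{O}_g(R)$ is Zariski dense in $Y$. Conjecture \ref{KS} for $f$ and $g$ now yields $\alpha_f(Q)=\delta_f$ and $\alpha_g(R)=\delta_g$, so combining the previous step with Corollary \ref{Lemma:dynamical degree} gives
$$\alpha_{f\times g}(P)=\max\{\delta_f,\delta_g\}=\delta_{f\times g},$$
which proves Conjecture \ref{KS} for $f\times g$.

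\emph{Main obstacle.} The only delicate point is the middle step: one must make sure that the two bounded additive discrepancies — the $O(1)$ from comparing $h_H$ with $h_{H_X}+h_{H_Y}$, and the one from the truncation $\max\{\,\cdot\,,1\}$ — do not perturb the exponential growth rate. This is precisely where the normalization $h^+\ge 1$ is used: an additive $O(1)$ error between quantities bounded below by $1$ becomes a multiplicative factor $(1+C)^{\pm 1/n}\to 1$, and Lemma \ref{limit} supplies the existence of the limit for $v_n^{1/n}$ that the squeeze then transfers to $u_n^{1/n}$.
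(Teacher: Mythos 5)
Your proof is correct and follows essentially the same route as the paper's: decompose the height via $H=\pr_1^\ast H_X+\pr_2^\ast H_Y$, apply Lemma \ref{limit} to split the growth rate, and identify the result with $\delta_{f\times g}$ via Corollary \ref{Lemma:dynamical degree}. The only differences are cosmetic: the paper simply fixes a height representative with $h_H=h_{\pr_1^\ast D_1}+h_{\pr_2^\ast D_2}$ exactly (avoiding your $O(1)$ bookkeeping and multiplicative squeeze), while you explicitly record the easy fact --- used implicitly in the paper --- that density of $\mathcal{O}_{f\times g}(P)$ forces density of the projected orbits.
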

\begin{proof}
Let $D_1$ and $D_2$ be ample divisors on $X$ and $Y$, respectively.
Then $H:=\pr_1^\ast D_1+\pr_2^\ast D_2$ is an ample divisor
on $X\times Y$ (see \cite[I\hspace{-.1em}I. Proposition $7.10$]{AG}).
Fix the Weil height function associated with $H$,
$\pr_1^\ast D_1$ and $\pr_2^\ast D_2$ to satisfy that
$h_{H}=h_{\pr_1^\ast D_1}+h_{\pr_2^\ast D_2}.$
Since $D_1$ and $D_2$ are ample, we may assume that
$h_{\pr_1^\ast D_1}$ and $h_{\pr_2^\ast D_2}$ are positive functions.
When the forward $(f\times g)$-orbit $\mathcal{O}_{f\times g}(P)$
of $P\in (X\times Y)(\var{k})$ is Zariski dense in $X\times Y$,
the forward $f$-orbit and $g$-orbit of $\pr_1(P)$ and $\pr_2(P)$
are Zariski dense in $X$ and $Y$, respectively.
Since Conjecture \ref{KS} is true for $f$ and $g$,
for every $\var{k}$-rational point $P \in (X\times Y)(\var{k})$
whose forward $(f \times g)$-orbit $\mathcal{O}_{f\times g}(P)$
is Zariski dense in $X\times Y$, we have
\begin{align*}
\delta_{f\times g}&= \max\{\delta_f, \delta_g\}
		&&\text{by Corollary }\ref{Lemma:dynamical degree}\\
&= \max\{ \alpha_f(\pr_1 (P)), \alpha_g(\pr_2 (P))\}\\
&= \max\{ \lim_{n\to \infty} h^+_{\pr_1^\ast D_1}((f\times g)^n(P))^{1/n},
	 \lim_{n\to\infty} h^+_{\pr_2^\ast D_2}((f\times g)^n(P))^{1/n}\}\\
&= \lim_{n\to \infty} (h^+_{\pr_1^\ast D_1}((f\times g)^n(P))
	+ h^+_{\pr_2^\ast D_2}((f\times g)^n(P)))^{1/n}
		&&\text{by Lemma \ref{limit}}\\
&= \lim_{n\to\infty} h^+_{H}((f\times g)^n(P))^{1/n}\\
&=\alpha_{f\times g}(P).
\end{align*}
Hence Conjecture \ref{KS} is true for $f\times g$.
\end{proof}

\begin{lem}\label{iterate}
Let $X$ be a smooth projective variety defined over a number field $k$,
and $f\colon X \longrightarrow X$ an endomorphism defined over $k$.
Then Conjecture \ref{KS} is true for $f$
if and only if Conjecture \ref{KS} is true for $f^t$ for some $t\geq 1$.
\end{lem}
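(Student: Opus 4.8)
The plan is to deduce the equivalence from two formal identities---the degree relation $\delta_{f^t}=\delta_f^t$ of Remark~\ref{n-th power of delta}, and an analogous relation $\alpha_{f^t}(Q)=\alpha_f(Q)^t$ for arithmetic degrees---together with a bookkeeping argument relating the forward $f$-orbit of a point to the forward $f^t$-orbits of the finitely many points $Q, f(Q), \dots, f^{t-1}(Q)$. First I would record that, since $f$ is an endomorphism, so is $f^t$, and hence by \cite[Theorem 3]{ab1} (see Remark~\ref{convergence}) the arithmetic degrees $\alpha_f(Q)$ and $\alpha_{f^t}(Q)$ are defined for every $Q\in X(\var{k})$; in particular the ``arithmetic degree is defined'' clause of Conjecture~\ref{KS} is automatic for both $f$ and $f^t$. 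Writing $c_m:=h_H^+(f^m(Q))$, so that $c_m^{1/m}\to\alpha_f(Q)\geq 1$, passing to the subsequence $m=tn$ gives $\alpha_{f^t}(Q)=\lim_n c_{tn}^{1/n}=\bigl(\lim_n c_{tn}^{1/(tn)}\bigr)^t=\alpha_f(Q)^t$, and reindexing $n\mapsto n+i$ (using $\alpha_f(Q)\geq 1>0$) gives the shift-invariance $\alpha_f(f^i(Q))=\alpha_f(Q)$ for all $i\geq 0$.

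For the ``only if'' direction, suppose Conjecture~\ref{KS} holds for $f$, and let $P\in X(\var{k})$ have Zariski dense forward $f^t$-orbit. Since $\mathcal{O}_{f^t}(P)\subseteq\mathcal{O}_f(P)$, the forward $f$-orbit of $P$ is also Zariski dense, so $\alpha_f(P)=\delta_f$ by hypothesis; raising to the $t$-th power and using the two identities above yields $\alpha_{f^t}(P)=\alpha_f(P)^t=\delta_f^t=\delta_{f^t}$, as required.

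For the ``if'' direction, suppose Conjecture~\ref{KS} holds for $f^t$ for some $t\geq 1$, and let $P\in X(\var{k})$ have Zariski dense forward $f$-orbit. Writing each $m\geq 0$ as $m=tn+i$ with $0\leq i<t$ shows $\mathcal{O}_f(P)=\bigcup_{i=0}^{t-1}\mathcal{O}_{f^t}(f^i(P))$. Taking Zariski closures and using that $X$ is irreducible---hence not a union of finitely many proper closed subsets---I would extract an index $i$ for which $\mathcal{O}_{f^t}(f^i(P))$ is Zariski dense in $X$. Then Conjecture~\ref{KS} for $f^t$ applied at $f^i(P)$ gives $\alpha_{f^t}(f^i(P))=\delta_{f^t}=\delta_f^t$, and since $\alpha_{f^t}(f^i(P))=\alpha_f(f^i(P))^t=\alpha_f(P)^t$ with $\alpha_f(P)$ and $\delta_f$ nonnegative real numbers, we conclude $\alpha_f(P)=\delta_f$. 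This proves Conjecture~\ref{KS} for $f$.

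The only step that is not purely formal is the orbit bookkeeping in the ``if'' direction: Zariski density of $\mathcal{O}_f(P)$ does not pass directly to $\mathcal{O}_{f^t}(P)$, but only to one of the finitely many orbits $\mathcal{O}_{f^t}(f^i(P))$, and it is exactly the irreducibility of $X$ that lets one extract such an $i$. I expect this to be the main (and essentially the only) point requiring attention; everything else is manipulation of the limits defining the degrees, which is legitimate precisely because $f$ is an endomorphism, so all the relevant limits exist.
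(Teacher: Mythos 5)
Your proof is correct and takes essentially the same approach as the paper's: the same decomposition $\mathcal{O}_f(P)=\bigcup_{i=0}^{t-1}f^i(\mathcal{O}_{f^t}(P))$, the same reduction to Remark~\ref{convergence} for existence of arithmetic degrees, and the same use of $\alpha_{f^t}(P)=\alpha_f(P)^t$ together with $\delta_{f^t}=\delta_f^t$. The only cosmetic difference is in the density bookkeeping: the paper asserts directly that $\mathcal{O}_f(P)$ is dense iff $\mathcal{O}_{f^t}(P)$ is dense (using that $f^i$ is a finite morphism, so it cannot raise the dimension of a closure) and then applies the conjecture for $f^t$ at $P$ itself, whereas you extract a dense shifted orbit $\mathcal{O}_{f^t}(f^i(P))$ and transfer back to $P$ via the shift-invariance $\alpha_f(f^i(P))=\alpha_f(P)$; both variants are valid and amount to the same limit manipulations.
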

\begin{proof}
One direction is trivial.
Assume that Conjecture \ref{KS} is true for $f^t$ for some $t\geq 1$.
For every $\var{k}$-rational point $P\in X(\var{k})$, we have 
$$
\mathcal{O}_f(P) = \bigcup_{i=0}^{t-1}f^i(\mathcal{O}_{f^t}(P)).
$$
Therefore, $\mathcal{O}_f(P)$ is Zariski dense in $X$ if and only if
$\mathcal{O}_{f^t}(P)$ is Zariski dense in $X$.
Since we know the existence of $\alpha_f(P)$ (see Remark \ref{convergence}),
we get
\begin{align*}
\alpha_f(P)&= \lim_{n\to\infty} h_H^+(f^n(P))^{1/n}\\
&= \lim_{n\to \infty}h_H^+((f^{t})^n(P))^{1/tn}\\
&= \alpha_{f^t}(P)^{1/t}\\
&= \delta_{f^t}^{1/t}\\
&= \delta_f
		&& \text{by Remark \ref{n-th power of delta}}.
\end{align*}
Hence Conjecture \ref{KS} is true for $f$.
\end{proof}

\section{Splitting of endomorphisms on product varieties}\label{Splitting of endomorphisms}
In this section, we work over $\bb{C}$.
All varieties and morphisms are defined over $\bb{C}$.

Let $X$ and $Y$ be smooth projective varieties, and
$f\colon X\times Y\longrightarrow X\times Y$ a dominant endomorphism.
For a $\bb{C}$-rational point $x \in X(\bb{C})$,
let $i_x\colon Y\hookrightarrow X\times Y$ be
the closed embedding defined by $i_x(y):=(x,y)$.
For a $\bb{C}$-rational point $y\in Y(\bb{C})$,
let $j_y\colon X\hookrightarrow X\times Y$ be
the closed embedding defined by $j_y(x):=(x,y)$.

We say an endomorphism $f\colon X\times Y\longrightarrow X\times Y$ is {\it split}
if there exist endomorphisms $g\colon X\longrightarrow X$ and $h\colon Y\longrightarrow Y$
satisfying $f=g\times h$.
In this section, we study sufficient conditions
for endomorphisms on product varieties to be split.
\subsection{Some lemmata on endomorphisms on product varieties}\label{dominancy}

\begin{lem}\label{dominant}
Let $y\in Y(\bb{C})$ be a $\bb{C}$-rational point, and
$Z\subset X\times Y$ a closed subvariety with $\dim Z=\dim Y$.
Then the following are equivalent.
\begin{itemize}
\item $\pr_2|_Z\colon Z\longrightarrow Y$ is dominant.
\item $\deg(j_y(X)\cdot Z)\neq 0$.
			Here, $j_y(X)\cdot Z$ is a zero cycle on $X\times Y$,
			which is the intersection product of cycles $j_y(X)$ and $Z$ on $X\times Y$.
\end{itemize}
\end{lem}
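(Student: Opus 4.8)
The plan is to deduce the equivalence from a short computation with rational equivalence classes on the smooth variety $X\times Y$, the point being that $X_y$ is simply the flat pull-back of a point under $\pr_2$. Write $n:=\dim Y=\dim Z$. Since $X\times Y$ is smooth, the intersection product is defined on $A_\ast(X\times Y)$ and $X_y\cdot Z$ is a well-defined class in $A_0(X\times Y)$ depending only on $[X_y]\in A_{\dim X}(X\times Y)$ and $[Z]\in A_n(X\times Y)$; in particular $\deg(X_y\cdot Z)$ makes sense even if $X_y$ and $Z$ do not meet in the expected dimension $0$.

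Next I would use that $\pr_2\colon X\times Y\to Y$ is smooth, hence flat, of relative dimension $\dim X$, and that $X_y=X\times\{y\}$ is the (reduced) fibre $\pr_2^{-1}(y)$, so $[X_y]=\pr_2^\ast[y]$ with $[y]\in A_0(Y)$ the class of the point $y$. The projection formula for the proper morphism $\pr_2$ then gives
$$\pr_{2\ast}(X_y\cdot Z)=\pr_{2\ast}\bigl(\pr_2^\ast[y]\cdot[Z]\bigr)=[y]\cdot\pr_{2\ast}[Z]\in A_0(Y).$$
Now $\pr_{2\ast}[Z]\in A_n(Y)=\bb{Z}[Y]$ equals $m\cdot[Y]$, where $m=0$ if $\dim\overline{\pr_2(Z)}<\dim Z$, i.e.\ if $\pr_2|_Z$ is not dominant, and $m\geq 1$ is the generic degree of $\pr_2|_Z$ (which is generically finite onto $Y$, as $\dim Z=\dim Y$) if $\pr_2|_Z$ is dominant. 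Intersecting with $[Y]$ is the identity and $\deg[y]=1$, and $\deg=\deg\circ\pr_{2\ast}$ on $A_0(X\times Y)$ because $X\times Y\to\Spec\bb{C}$ factors through $Y$; taking degrees therefore yields $\deg(X_y\cdot Z)=m$, which is nonzero exactly when $\pr_2|_Z$ is dominant. This is the claim.

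I do not anticipate a real obstacle: the statement is essentially a repackaging of standard intersection-theoretic facts (flat pull-back of a fibre, the projection formula, and the description of $\pr_{2\ast}[Z]$). The only mild subtlety is that $X_y\cap Z$ need not be zero-dimensional, which is precisely why it is cleaner to work with classes on the smooth ambient variety than with local intersection multiplicities. Alternatively, since we are over $\bb{C}$, one may run the same computation in singular cohomology: under the K\"unneth decomposition $\cl(X_y)=1\otimes\mu$ with $\mu\in H^{2n}(Y,\bb{Q})$ the point class (independent of $y$ since $Y$ is connected), and using $\pr_{2\ast}\cl(Z)=\cl(\pr_{2\ast}[Z])$ one sees that the $H^{2\dim X}(X,\bb{Q})\otimes H^0(Y,\bb{Q})$-component of $\cl(Z)$ is $m\cdot([\mathrm{pt}_X]\otimes 1)$; then $\cl(X_y)\cup\cl(Z)=m\,([\mathrm{pt}_X]\otimes\mu)$, every other K\"unneth term of $\cl(Z)$ contributing $0$, and integrating over $X\times Y$ again gives $\deg(X_y\cdot Z)=m$.
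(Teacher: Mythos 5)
Your proof is correct and uses essentially the same tools as the paper (the identity $[X_y]=\pr_2^\ast[y]$, the projection formula, and the description of $\pr_{2\ast}[Z]$), and your computation of the forward direction is identical. The one small difference is in the non-dominant direction: the paper argues geometrically, invoking the independence of $\deg(X_y\cdot Z)$ from $y$ (via algebraic equivalence, Remark \ref{indep}) to move $y$ off $\pr_2(Z)$ so that $X_y\cap Z=\emptyset$; you instead handle both directions uniformly through $\pr_{2\ast}[Z]=m[Y]$ with $m=0$ when $\pr_2|_Z$ is not dominant, which is a touch cleaner and avoids the auxiliary remark. Both routes are valid and of comparable length.
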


\begin{proof}
If $\pr_2|_Z$ is dominant, we have $\pr_{2,\ast}(Z)=\deg(Z/Y)Y$
as cycles on $Y$ (see \cite[1.4]{Fulton}).
Since $j_y(X)=\pr_2^\ast (y)$ as cycles on $X\times Y$, we have
\begin{align*}
\deg(j_y(X) \cdot Z) &= \deg (\pr_{2}^\ast(y)\cdot Z)\\
&= \deg(y\cdot \pr_{2,\ast}(Z))
		&& \text{by the projection formula (\cite[Example 8.1.7]{Fulton})}\\
&= \deg(Z/Y)\\
&\geq 1.
\end{align*}
Conversely, if $\pr_2|_Z$ is not dominant, $\pr_2|_Z$ is not surjective.
We may assume $y\not \in \pr_2(Z)$
because $\deg(j_y(X)\cdot Z)$ does not depend on $y$;
see Remark \ref{indep} below.
Since $j_y(X)\cap Z=\emptyset$, we have $\deg(j_y(X)\cdot Z)=0$.
\end{proof}

\begin{rmk}\label{indep}
For $y,y'\in Y(\bb{C})$,
the cycles $j_y(X)$ and $j_{y'}(Y)$ on $X\times Y$ are algebraically equivalent.
Hence the degree $\deg(j_y(X)\cdot Z)$
does not depend on $y$ (see \cite[Corollary 10.2.2]{Fulton}).
\end{rmk}

\begin{lem}\label{dominant2}
There is a positive integer $t\geq 1$ such that the endomorphism
$$\pr_2\circ f^t\circ i_x\colon Y\longrightarrow Y$$
is dominant for every $x\in X(\bb{C})$.
\end{lem}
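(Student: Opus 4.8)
The plan is to reduce the statement to a question about the intersection numbers $\deg(X_y \cdot (f^t \circ i_x)(Y))$ and then to use the fact that these are integers bounded independently of $t$. First I would fix an ample divisor $H$ on $X \times Y$ and set $d := \dim Y$. For each $x \in X(\bb{C})$ and each $t \geq 1$, consider the image $Z_{x,t} := (f^t \circ i_x)(Y) \subset X \times Y$. Since $f^t \circ i_x \colon Y \to X \times Y$ is a morphism from a $d$-dimensional variety, $Z_{x,t}$ is a closed subvariety of dimension at most $d$. By Lemma \ref{dominant} (applied with $Z = Z_{x,t}$ when $\dim Z_{x,t} = d$), the map $\pr_2 \circ f^t \circ i_x$ is dominant if and only if $\dim Z_{x,t} = d$ and $\deg(X_y \cdot Z_{x,t}) \neq 0$; in fact, when $\pr_2|_{Z_{x,t}}$ is dominant the intersection number equals the degree $\deg(Z_{x,t}/Y) \geq 1$, while otherwise it is $0$. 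So the key quantity to control is $N(x,t) := \deg\bigl(X_y \cdot (f^t)_\ast(i_{x,\ast} Y)\bigr)$, interpreted via pushforward of cycles; this is a nonnegative integer, and dominance of $\pr_2 \circ f^t \circ i_x$ is equivalent to $N(x,t) \geq 1$.

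Next I would bound $N(x,t)$ from above, uniformly in $x$. Write $A := \pr_2^\ast H_Y$ for a fixed ample $H_Y$ on $Y$, so that $X_y$ is numerically a multiple of $A^d$ up to the constant coming from $\deg H_Y$; more precisely $X_y = \pr_2^\ast(y)$ and $\deg(X_y \cdot W)$ computes the degree of $\pr_2|_W$ for any $d$-cycle $W$ dominating $Y$. The cycle class of $i_{x,\ast} Y = X_x \subset X \times Y$ (the fiber of $\pr_1$) is independent of $x$ by the algebraic-equivalence argument of Remark \ref{indep}, so the pushed-forward class $(f^t)_\ast [X_x] \in \NS(X \times Y)$ depends only on $t$, not on $x$. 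Hence $N(x,t)$ depends only on $t$; call it $N(t)$. Now I claim $N(t)$ is bounded: indeed $(f^t)_\ast = (f_\ast)^t$ on $\NS(X\times Y)_{\bb{R}}$ for an endomorphism, and pairing with the fixed class $[X_y]$ gives $N(t) = \langle [X_y], (f_\ast)^t [X_x]\rangle$. The spectral radius of $f_\ast$ governs the growth of this pairing, but the point is more elementary: since $f$ is a morphism (not merely rational), $f^t$ maps the fiber $X_x$ into the fiber over $\pr_1(f^t(x, \cdot))$ if $f$ respected the first projection — but it need not. Instead I would argue: $f^t|_{X_x} \colon X_x \cong X \to X\times Y$ has image of dimension $\leq \dim X$; the composite $\pr_2 \circ f^t|_{X_x}$ therefore... wait, $\dim X$ could exceed $d = \dim Y$. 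Let me instead run the argument through $i_x$: we want $\pr_2 \circ f^t \circ i_x \colon Y \to Y$ dominant, and $Y$ is $d$-dimensional, so the question is whether the $d$-dimensional source surjects onto $Y$; equivalently whether $Z_{x,t}$ has a component dominating $Y$. The obstruction is that $Z_{x,t}$ might collapse into a proper subvariety of $Y$ under $\pr_2$.

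To finish, I would invoke a pigeonhole/stabilization argument on dimensions. For fixed $x$, consider the decreasing-or-stabilizing behavior of $\dim \pr_2(Z_{x,t})$ as $t$ grows. Actually the cleanest route uses Lemma \ref{dominant} together with the uniformity in $x$ just established: since $N(x,t) = N(t)$ is independent of $x$, it suffices to show $N(t) \geq 1$ for some $t$, and then that same $t$ works for all $x$ simultaneously. Suppose for contradiction $N(t) = 0$ for all $t \geq 1$; then for every $t$ and every $x$, $\pr_2 \circ f^t \circ i_x$ is not dominant, so its image lies in a proper closed subset $W_t \subsetneq Y$ (and by the uniformity, $Z_{x,t}$ either has dimension $<d$ or collapses, independently of $x$ — so one can take $W_t$ independent of $x$). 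But then $f^t(X \times Y) = f^t\bigl(\bigcup_{x} i_x(Y)\bigr) \subset \pr_1(\cdot) \times W_t$-type set... I would make this precise: $\pr_2(f^t(X\times Y)) = \bigcup_x \pr_2(f^t(i_x(Y))) \subset W_t \subsetneq Y$, contradicting the dominance of $f$ (hence of $f^t$, hence of $\pr_2 \circ f^t$) onto $X \times Y$. The main obstacle is the bookkeeping needed to make ``$W_t$ independent of $x$'' rigorous: one must check that the locus of $x$ for which $\pr_2 \circ f^t \circ i_x$ fails to be dominant is either all of $X$ or has a uniform image, which is exactly what Remark \ref{indep} delivers via the constancy of the intersection number $N(x,t)$ in $x$. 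Once that is in hand, taking any single $t$ with $N(t) \geq 1$ — which exists since $f$ is dominant — gives the required $t$ working uniformly over $x \in X(\bb{C})$.
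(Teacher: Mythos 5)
Your reduction via Lemma~\ref{dominant}, and the observation (via Remark~\ref{indep}) that the intersection number $N(x,t)$ is independent of $x$ so that one may work with a fixed $x_0$, both match the paper exactly. The gap is in the final contradiction step. You assume $N(t)=0$ for all $t\geq 1$ and conclude that
$\pr_2(f^t(X\times Y)) = \bigcup_{x} \pr_2\bigl(f^t(i_x(Y))\bigr)$
lies in a common proper closed subset $W_t\subsetneq Y$, hence $f^t$ is not dominant. That inference is false: each $\pr_2(f^t(i_x(Y)))$ being a proper closed subset of $Y$ does not put them inside a common proper closed subset as $x$ ranges over $X(\bb{C})$; they can sweep out all of $Y$. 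Take $X=Y=\bb{P}^1$ and $f(x,y)=(y,x)$. Then $\pr_2\circ f\circ i_x$ is the constant map with value $x$, so $N(x,1)=0$ for every $x$, yet $\bigcup_x \pr_2(f(i_x(Y)))=Y$ and $f$ is an automorphism. So dominance of $f$ gives no contradiction from ``$N(t)=0$ for all $x$ at a single $t$,'' and Remark~\ref{indep} only tells you that the failure of dominance is uniform in $x$ for each fixed $t$ --- it does not supply a common $W_t$. Your parenthetical acknowledgement that this is ``the main obstacle'' is correct; it is not bookkeeping but a genuinely false step, and the whole contradiction you aim for never gets off the ground.

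What replaces this in the paper's proof, and what your proposal is missing, is a linear-recurrence argument on cohomology. One uses the finite-dimensionality of $H^{2\dim X}(X(\bb{C})\times Y(\bb{C}),\bb{Q})$ to find the first linear dependence $\cl(Z_{x_0,s+1})=\sum_{i=0}^{s} a_i\,\cl(Z_{x_0,i})$ among the classes of the images $Z_{x_0,t}=f^t(i_{x_0}(Y))$. Cupping with $\cl(X_{y_0})$ and using $N(0)=1$, $N(t)=0$ for $t\geq 1$ forces $a_0=0$. Then, because $f$ is a finite surjective endomorphism, $f_\ast$ is a bijective linear map on $H^{2\dim X}$ sending $\cl(Z_{x_0,t})$ to a nonzero multiple of $\cl(Z_{x_0,t+1})$; injectivity of $f_\ast$ lets you ``shift'' the relation down one step and express $\cl(Z_{x_0,s})$ as a combination of $\cl(Z_{x_0,0}),\ldots,\cl(Z_{x_0,s-1})$, contradicting the minimality of $s$. (The case $s=0$ is handled separately: $a_0=0$ would give $\cl(Z_{x_0,1})=0$, impossible for a prime cycle.) This is the essential content of the lemma, and it is not a consequence of the dominance of $f$ in the direct way you proposed.
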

\begin{proof}
Fix $\bb{C}$-rational points $x_0\in X(\bb{C})$ and $y_0 \in Y(\bb{C})$.
We put $Z_{x_0,t}:=f^t(i_{x_0}(Y))$ for each $t\geq 0$.
We have $\dim Z_{x_0,t}=\dim Y$ because $f$ is finite
(see \cite[Lemma 1]{Beau2}, \cite[Lemma 2.3 (1)]{Fujim}).
Fix $t\geq 1$.
Let us consider the cohomology classes
$\cl(j_{y_0}(X))\in H^{2\dim Y}(X(\bb{C})\times Y(\bb{C}),\bb{Q})$ and
$\cl (Z_{x_0,t})\in H^{2\dim X}(X(\bb{C})\times Y(\bb{C}),\bb{Q})$.
Recall that the intersection number is calculated
by the cup product of cohomology classes.
Hence, we have $\cl(j_{y_0}(X))\cup \cl(Z_{x_0,t})=\deg(j_{y_0}(X)\cdot Z_{x_0,t})$
in $H^{2\dim X+2\dim Y}(X(\bb{C})\times Y(\bb{C}), \bb{Q})=\bb{Q}.$
By Lemma \ref{dominant}, $\pr_2\circ f^t\circ i_{x_0}$ is dominant
if and only if $\deg (j_{y_0}(X)\cdot Z_{x_0,t})\neq 0.$
Since $\cl(Z_{x_0,t})$ does not depend on $x_0$,
we see that $\pr_2\circ f^t \circ i_x$ is dominant for one $x\in X(\bb{C})$
if and only if it is dominant for every $x\in X(\bb{C})$.
Therefore, it is enough to consider the case of $\pr_2\circ f^t \circ i_{x_0}.$

Since $H^{2\dim X}(X(\bb{C})\times Y(\bb{C}), \bb{Q})$ is
a finite dimensional $\bb{Q}$-vector space,
there is an unique integer $s\geq 0$ such that
$\cl(Z_{x_0,i})$ $(0\leq i \leq s)$ are linearly independent over $\bb{Q}$,
but $\cl(Z_{x_0,i})$ $(0\leq i\leq s+1)$ are linearly dependent over $\bb{Q}$,
where note that $\cl(Z_{x_0,i})\neq 0$
since $X\times Y$ is projective and $Z_{x_0,i}$ is irreducible.
We write
$\cl(Z_{x_0,s+1})=\sum_{i=0}^s a_i\cl(Z_{x_0,i})$
for some $a_i\in \bb{Q}.$

Assume that $\pr_2\circ f^t \circ i_{x_0}$ is not dominant for all $t\geq 1$.
Then we have $\cl(j_{y_0}(X))\cup \cl(Z_{x_0,t})=0$ for all $t\geq 1.$
On the other hand, we again have $\cl(j_{y_0}(X))\cup \cl(Z_{x_0,0})=1.$
Calculating the cup products with $\cl(j_{y_0}(X)),$ we have
$$0=\cl(j_{y_0}(X))\cup\cl(Z_{x_0,s+1})=\sum_{i=0}^s a_i(\cl(j_{y_0}(X))\cup \cl(Z_{x_0,i}))=a_0.$$

If $s=0$, we get $\cl(Z_{x_0,1})=0$.
But this is a contradiction because $\cl(Z_{x_0,1})$ is a prime cycle.
Thus we may assume $s\geq 1$.

Recall that $f$ induces a bijective $\bb{Q}$-linear map
$$f_\ast\colon H^{2\dim X}(X(\bb{C})\times Y(\bb{C}),\bb{Q})
\overset{\cong}{\longrightarrow} H^{2\dim X}(X(\bb{C})\times Y(\bb{C}),\bb{Q})$$
satisfying
$f_\ast(\cl(Z_{x_0,t}))=\cl(f_\ast (Z_{x_0,t}))=\deg(Z_{x_0,t}/Z_{x_0,t+1})\cdot \cl(Z_{x_0,t+1})$
(see \cite[Lemma 1]{Beau2}). We put
$$\alpha:=\sum_{i=1}^s a_i\cdot \deg(Z_{x_0,i-1}/Z_{x_0,i})^{-1}\cdot \cl(Z_{x_0,i-1}).$$
Since $a_0=0$, we have
\begin{align*}
f_\ast(\alpha)
&= \sum_{i=1}^s a_i\cl(Z_{x_0,i})\\
&= \cl(Z_{x_0,s+1})\\
&= f_*(\deg(Z_{x_0,s}/Z_{x_0,s+1})^{-1}\cdot \cl(Z_{x_0,s})).
\end{align*}
By the injectivity of $f_\ast$, we have
$\alpha=\deg(Z_{x_0,s}/Z_{x_0,s+1})^{-1}\cdot \cl(Z_{x_0,s}).$
But it contradicts with the assumption that
$\cl(Z_{x_0,i})$ $(0\leq i\leq s)$ are linearly independent over $\bb{Q}$.
Therefore, $\pr_2\circ f^t\circ i_{x_0}$ is dominant for some $t\geq 1$.
\end{proof}

\subsection{Splitting of endomorphisms on product varieties (1)}\label{important1}

For a smooth projective variety $X$, the automorphism group of $X$,
denoted by $\Aut(X)$,
has a natural structure of a group scheme locally of finite type (see \cite[Theorem 3.7]{MO}).
Its neutral component is denoted by $\Aut^\circ(X).$
Let $\Sur(X)$ be the scheme of surjective endomorphisms on $X$,
which has a natural action of $\Aut(X)$ (for details, see \cite{Fujim}, \cite{Brion}).

\begin{lem}
\label{Betti}
Let $X$ be a smooth projective variety.
\begin{enumerate}
\item $b_1(X)=0$
if and only if $\Alb(X)=0$.
\item If $b_1(X) = 0$, $\Aut^\circ(X)$ is a linear algebraic group.
\end{enumerate}
\end{lem}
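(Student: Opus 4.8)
The plan is to prove the two assertions in the order stated. For part (1), I would use the standard correspondence between $H^1(X(\bb{C}),\bb{Q})$ and the Albanese variety: the Albanese variety $\Alb(X)$ is an abelian variety whose dimension equals the irregularity $q(X)=\dim H^1(X,\mathcal{O}_X)$, and over $\bb{C}$ one has $b_1(X)=2q(X)$ via the Hodge decomposition $H^1(X(\bb{C}),\bb{C})=H^{1,0}\oplus H^{0,1}$ together with complex conjugation. Hence $b_1(X)=0$ if and only if $q(X)=0$ if and only if $\dim\Alb(X)=0$, i.e. $\Alb(X)=0$. One subtlety to address briefly is that $\Alb(X)$ being zero as a variety means it is a point; I would phrase this carefully so that the equivalence reads correctly.

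For part (2), suppose $b_1(X)=0$. By a theorem of Matsumura, the Albanese map induces an injective homomorphism of algebraic groups from $\Aut^\circ(X)$ to an abelian variety (concretely, an element of $\Aut^\circ(X)$ acts on $\Alb(X)$ by translation by a point, and this gives a homomorphism whose kernel is trivial because an automorphism fixing the Albanese translation-action and lying in the connected component must be trivial on the base — more precisely one uses that the induced morphism $\Aut^\circ(X)\to\Alb(X)$ has trivial kernel). When $\Alb(X)=0$ this says nothing directly, so instead I would invoke the structural result: the neutral component $\Aut^\circ(X)$ is a connected algebraic group, and by Chevalley's theorem it is an extension of an abelian variety by a connected linear algebraic group. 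The abelian part is precisely the image of $\Aut^\circ(X)$ in $\Alb(X)$ under Matsumura's map (see \cite{MO}), which vanishes when $\Alb(X)=0$. Therefore $\Aut^\circ(X)$ is linear.

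I expect the only genuine point requiring care is pinning down the exact reference and statement for the fact that the abelian-variety quotient of $\Aut^\circ(X)$ is controlled by $\Alb(X)$; this is classical (Matsumura, Nishi) but the precise formulation must be cited from \cite{MO}. The rest is a routine application of the Hodge-theoretic description of $b_1$ and of the Chevalley structure theorem for connected algebraic groups. Once part (1) is in place, part (2) follows immediately by combining the Chevalley decomposition with the vanishing of $\Alb(X)$.
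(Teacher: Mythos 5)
Your argument is correct in both parts and follows the same underlying route as the paper, which simply cites the relevant facts rather than proving them: for part (1), the equality $b_1(X)=2\dim\Alb(X)$ from Beauville's book, and for part (2), Fujiki (Corollary 5.8) and Brion (p.\ 73, Remark 2). You unpack the content of those citations, which is fine. One caution on part (2): your opening claim that the Albanese map induces an \emph{injective} homomorphism $\Aut^\circ(X)\to\Alb(X)$ with trivial kernel is false in general --- the kernel is precisely the linear part, not trivial (e.g.\ $\Aut^\circ(\bb{P}^n)=\mathrm{PGL}_{n+1}$ maps to $\Alb(\bb{P}^n)=0$, so the entire group lies in the kernel). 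If that injectivity held, your hypothesis $b_1(X)=0$ would force the far stronger and false conclusion $\Aut^\circ(X)=0$. You do notice this claim ``says nothing directly'' and pivot to the correct formulation of the Nishi--Matsumura theorem: the Chevalley abelian quotient of $\Aut^\circ(X)$ embeds in $\Alb(X)$, so $\Alb(X)=0$ forces the abelian quotient to vanish and $\Aut^\circ(X)$ to be affine. That pivot is a correct proof; just delete the misleading opening about injectivity and trivial kernel, since the corrected statement is the one doing all the work.
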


\begin{proof}
The part (1) follows from the equality
$b_1(X) = 2 \cdot \dim \Alb(X)$
(see \cite[Theorem V.13]{Beau1}).
For the part (2), see \cite[Corollary 5.8]{Fujiki}, \cite[p.73, Remark 2]{Brion}.
\end{proof}

\begin{lem}
\label{Fibration}
Let $X$ and $Y$ be smooth projective varieties,
and $f \colon X \times Y \longrightarrow X \times Y$ a dominant endomorphism.
Assume that at least one of the following conditions is satisfied:
\begin{itemize}
\item $b_1(X) = 0$, or
\item $\Aut^\circ(Y)$ is a linear algebraic group.
\end{itemize}
Then there is an integer $t \geq 1$ such that
$f^t(x,y) = (g(x,y),h(y))$
for some morphisms
$g \colon X \times Y \longrightarrow X$ and $h \colon Y \longrightarrow Y$.
\end{lem}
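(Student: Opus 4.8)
The plan is to use Lemma~\ref{dominant2} to reduce the statement to the claim that a certain family of surjective endomorphisms of $Y$, parametrized by $X$, is constant, and then to deduce that claim from the structure theory of the scheme $\Sur(Y)$. By Lemma~\ref{dominant2} we may, after replacing $f$ by a suitable positive power $f^t$ (which we still denote by $f$), assume that
$$\psi_x:=\pr_2\circ f\circ i_x\colon Y\longrightarrow Y$$
is dominant, hence surjective since $Y$ is projective, for every $x\in X(\bb{C})$. It then suffices to prove that $\psi_x$ is independent of $x$: for then, fixing $x_0\in X(\bb{C})$ and putting $g:=\pr_1\circ f$ and $h:=\psi_{x_0}$, we obtain $f(x,y)=(g(x,y),h(y))$, and undoing the replacement of $f$ by $f^t$ yields the assertion with exponent $t$.

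The assignment $x\mapsto\psi_x$ underlies a morphism $\Psi\colon X\to\Sur(Y)$, namely the one corresponding to $\pr_2\circ f\colon X\times Y\to Y$ under the universal property of $\mathrm{Hom}(Y,Y)$; it factors through $\Sur(Y)\subset\mathrm{Hom}(Y,Y)$ precisely because each $\psi_x$ is surjective. Since $X$ is a variety it is connected, so the image $\Psi(X)$ is a complete connected subvariety contained in a single connected component $\mathcal{C}$ of $\Sur(Y)$. At this point I would invoke two facts about $\mathcal{C}$ drawn from the structure theory of schemes of surjective endomorphisms (\cite{Fujim}, \cite{Brion}), both of which express rigidity of complete families of surjective self-morphisms of a fixed projective variety:
\begin{itemize}
\item every complete connected subvariety of $\mathcal{C}$ is, for a suitable choice of origin, an abelian variety, its translations being induced by the action of $\mathcal{C}$ on $\Alb(Y)$;
\item if $\Aut^\circ(Y)$ is a linear algebraic group, then every complete subvariety of $\mathcal{C}$ is a single point.
\end{itemize}

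Granting these, the proof concludes according to which hypothesis holds. If $\Aut^\circ(Y)$ is a linear algebraic group, the second fact shows $\Psi(X)$ is a point, so $\psi_x$ is independent of $x$. If instead $b_1(X)=0$, the first fact shows the complete connected subvariety $B:=\Psi(X)$ is (isomorphic to) an abelian variety; the surjective morphism $\Psi\colon X\to B$ then factors, after a translation, through the Albanese morphism of $X$, and since $\Alb(X)=0$ by Lemma~\ref{Betti}, the morphism $\Psi$ is constant, so $B$ is a point and again $\psi_x$ is independent of $x$. In either case $\psi_x$ does not depend on $x$, and the lemma follows.

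The main obstacle is the structure-theoretic input highlighted above: one must extract from \cite{Fujim} and \cite{Brion} the precise statement that a complete connected family of surjective endomorphisms of $Y$ is ``abelian'' — i.e.\ that the classifying morphism into a component of $\Sur(Y)$ is constant modulo the abelian variety of translations coming from $\Alb(Y)$ — and that linearity of $\Aut^\circ(Y)$ forces that abelian variety to be trivial on each component, so that such a family is in fact constant. A secondary point requiring care is the reduction step, namely that Lemma~\ref{dominant2} applies uniformly in $x$, so that after replacing $f$ by $f^t$ the endomorphism $\psi_x$ is genuinely surjective for every $x\in X(\bb{C})$; this surjectivity is exactly what licenses working inside $\Sur(Y)$, where the rigidity statements are available.
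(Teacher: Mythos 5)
You follow the paper's overall strategy---apply Lemma~\ref{dominant2}, view $x \mapsto \psi_x$ as a morphism $\Psi\colon X \to \Sur(Y)$, and exploit rigidity of complete families of surjective endomorphisms---but the decisive ``first fact'' you invoke is false as stated, which leaves a genuine gap. What is actually available (Horst's theorem, \cite[Theorem 3.1]{Horst}) is that the image $B := \Psi(X)$, being a connected compact complex subspace of $\Sur(Y)$, lies in a \emph{single} $\Aut^\circ(Y)$-orbit; and since a surjective endomorphism of the projective variety $Y$ has trivial stabilizer in $\Aut^\circ(Y)$, that orbit is a variety isomorphic to $\Aut^\circ(Y)$. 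But a complete connected subvariety of a connected algebraic group need not be an abelian variety: if, say, $\Aut^\circ(Y)$ is an abelian surface, a genus-$2$ curve embedded in it is a complete connected subvariety that is not an abelian variety. So the Albanese-factorization step, which needs the target $B$ to be an abelian variety, cannot be applied to $\Psi\colon X \to B$ as written.

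The repair---and what the paper actually does---is to push the Albanese argument to the Chevalley quotient of $\Aut^\circ(Y)$ rather than to $B$ itself. By Chevalley's theorem there is an exact sequence $1 \to \Aut^\circ(Y)_{\mathrm{lin}} \to \Aut^\circ(Y) \to A \to 1$ with $\Aut^\circ(Y)_{\mathrm{lin}}$ linear and $A$ an abelian variety. The composite $X \to \Aut^\circ(Y) \to A$ (through the identification of the orbit with $\Aut^\circ(Y)$) is a morphism from a projective variety to an abelian variety, hence constant when $\Alb(X)=0$; therefore $B$ sits inside one coset of $\Aut^\circ(Y)_{\mathrm{lin}}$, which is affine, so $B$, being complete and connected, is a point. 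Your second ``fact'' is correct for the same reason (the orbit is affine when $\Aut^\circ(Y)$ is linear), but it too silently relies on Horst's theorem, which you should cite explicitly. Finally, the mention of an ``action on $\Alb(Y)$'' is a red herring: the relevant abelian variety is the Chevalley quotient $\Aut^\circ(Y)/\Aut^\circ(Y)_{\mathrm{lin}}$, which is not $\Alb(Y)$ in general.
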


\begin{proof}
Take an integer $t \geq 1$ as in Lemma \ref{dominant2}.
Fix a $\bb{C}$-rational point $x_0 \in X(\bb{C})$,
and get a holomorphic map
$X(\bb{C}) \longrightarrow \Sur(Y)(\bb{C}), x \mapsto \pr_2 \circ f^{t} \circ i_x.$
The image of $X(\bb{C})$ in $\Sur(Y)(\bb{C})$ is contained
in a left $\Aut^\circ(Y)(\bb{C})$-orbit by Horst's theorem \cite[Theorem 3.1]{Horst}.
Hence there is a holomorphic map
$\varphi \colon X(\bb{C}) \longrightarrow \Aut^\circ(Y)(\bb{C})$ satisfying
$\pr_2 \circ f^t \circ i_x = \varphi(x) \circ \pr_2 \circ f^{t} \circ i_{x_0}$ for all $x \in X(\bb{C})$.

We shall show $\varphi$ is constant.
Since $\Aut^\circ(Y)$ is a connected algebraic group,
by Chevalley's theorem (\cite{Chevalley}),
there is a linear normal subgroup $\Aut^\circ(Y)_{\mathrm{lin}} \subset \Aut^\circ(Y)$
such that $\Aut^\circ(Y)/\Aut^\circ(Y)_{\mathrm{lin}}$
is an abelian variety.
If $b_1(X) = 0$, $\Alb(X)$ is trivial by Lemma \ref{Betti} (1).
By the universal property of the Albanese varieties (see \cite[Theorem V.13]{Beau1}),
the image of $X(\bb{C})$ in $(\Aut^\circ(Y)/\Aut^\circ(Y)_{\mathrm{lin}})(\bb{C})$ is trivial.
Hence $\varphi$ is a holomorphic map
from $X(\bb{C})$ to $(\Aut^\circ(Y)_{\mathrm{lin}})(\bb{C})$.
Since $X$ is a projective variety
and $\Aut^\circ(Y)_{\mathrm{lin}}$ is an affine variety,
$\varphi$ is constant.
Similarly, if $\Aut^\circ(Y)$ is a linear algebraic group,
we have $\Aut^\circ(Y) = \Aut^\circ(Y)_{\mathrm{lin}}$,
and $\varphi$ is constant.

Therefore, we conclude $\varphi(x) = \id_Y$ for all $x \in X(\bb{C})$.
Putting $g := \pr_1 \circ f^{t}$ and $h := \pr_2 \circ f^{t} \circ i_{x_0}$,
we have
$f^t(x,y) = (g(x,y),h(y))$.
\end{proof}

\begin{thm}\label{Splitting1}
Let $X$ and $Y$ be smooth projective varieties,
and $f\colon X\times Y \longrightarrow X\times Y$
a dominant endomorphism.
Assume that at least one of the following conditions is satisfied:
\begin{itemize}
\item $b_1(X)=0$,
\item $b_1(Y)=0$, or
\item $\Aut^{\circ}(X)$ and $\Aut^{\circ}(Y)$ are linear algebraic groups.
\end{itemize}
Then $f^t$ is split for some $t \geq 1$.
\end{thm}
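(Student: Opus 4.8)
The plan is to apply Lemma~\ref{Fibration} twice --- once in the form stated, and once with the roles of $X$ and $Y$ exchanged --- and then to patch the two resulting normal forms together on a common iterate.

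First I would record a reduction: by Lemma~\ref{Betti}~(2), the condition $b_1(X)=0$ forces $\Aut^\circ(X)$ to be a linear algebraic group, and likewise $b_1(Y)=0$ forces $\Aut^\circ(Y)$ to be linear. Using this, one checks that in every one of the three cases of the statement \emph{both} of the following hypotheses hold: (i) ``$b_1(X)=0$ or $\Aut^\circ(Y)$ linear'', and (ii) ``$b_1(Y)=0$ or $\Aut^\circ(X)$ linear''. Indeed, if $b_1(X)=0$ then (i) is immediate and (ii) holds because $\Aut^\circ(X)$ is then linear; the case $b_1(Y)=0$ is symmetric; and if both $\Aut^\circ(X)$ and $\Aut^\circ(Y)$ are linear then (i) and (ii) are both clear. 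From (i), Lemma~\ref{Fibration} provides an integer $t\geq 1$ and morphisms $g\colon X\times Y\to X$, $h\colon Y\to Y$ with $f^t(x,y)=(g(x,y),h(y))$. From (ii), applying Lemma~\ref{Fibration} to the endomorphism of $Y\times X$ obtained by conjugating $f$ by the swap isomorphism $X\times Y\cong Y\times X$, one gets an integer $s\geq 1$ and morphisms $g'\colon X\to X$, $h'\colon X\times Y\to Y$ with $f^s(x,y)=(g'(x),h'(x,y))$.

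Next I would propagate these normal forms to all multiples. An easy induction on $n$ shows that $f^{nt}(x,y)=(g_n(x,y),h^n(y))$ for suitable morphisms $g_n$ --- so the second coordinate of $f^{nt}$ depends only on $y$ --- and symmetrically the first coordinate of $f^{ns}$ depends only on $x$. Now set $N:=ts$. Viewing $f^N=f^{t\cdot s}$ shows that the second coordinate of $f^N$ depends only on $y$, while viewing $f^N=f^{s\cdot t}$ shows that its first coordinate depends only on $x$. Hence $f^N(x,y)=(G(x),H(y))$ for endomorphisms $G$ of $X$ and $H$ of $Y$, i.e.\ $f^N=G\times H$ is split, as required.

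The content of the argument is entirely contained in Lemma~\ref{Fibration} together with the observation $b_1=0\Rightarrow\Aut^\circ$ linear; the only thing requiring a little care is the bookkeeping ensuring that the ``one coordinate independent of the other variable'' property is stable under iteration and that the swapped application of Lemma~\ref{Fibration} is legitimate (which it is, since $X\times Y\cong Y\times X$ and hypotheses (i), (ii) are interchanged by this relabelling). I do not expect any genuine geometric obstacle.
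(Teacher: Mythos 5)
Your proof is correct, but it follows a genuinely different route from the paper's own argument for Theorem~\ref{Splitting1}. The paper applies Lemma~\ref{Fibration} once to get $f^t(x,y)=(g_1(x,y),h_1(y))$, and then, rather than invoking Lemma~\ref{Fibration} again, it re-runs the underlying Horst/Albanese argument directly on the first coordinate: it observes that each $g_1\circ j_y\colon X\to X$ is finite and surjective, uses \cite[Theorem 3.1]{Horst} to produce a holomorphic map $\psi\colon Y(\mathbb{C})\to\Aut^\circ(X)(\mathbb{C})$ with $g_1\circ j_y=\psi(y)\circ g_1\circ j_{y_0}$, and then shows $\psi$ is constant using that $\Aut^\circ(X)$ is linear (either by hypothesis or via Lemma~\ref{Betti}~(2) from $b_1(X)=0$). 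You instead treat Lemma~\ref{Fibration} as a black box, apply it twice with the roles of $X$ and $Y$ exchanged --- after the same key observation that $b_1=0$ forces $\Aut^\circ$ to be linear, so that both symmetrized hypotheses hold simultaneously in every case --- and then reconcile the two semi-triangular normal forms on the common iterate $f^{ts}$. Interestingly, this is exactly the strategy the paper uses to prove Theorem~\ref{Splitting2} from Lemma~\ref{DomBlock}, so your argument in effect unifies the two splitting theorems under one combining scheme. Your version is more modular and avoids repeating the Horst argument and the finiteness check for $g_1\circ j_y$; the paper's version produces the split at the same exponent $t$ from Lemma~\ref{Fibration} rather than at the product $ts$, but that makes no difference for the statement.
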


\begin{proof}
By changing the role of $X$ and $Y$, we may assume that
the first condition or the third condition is satisfied.
Take an integer $t\geq 1$ as in Lemma \ref{Fibration}.
Fix $\bb{C}$-rational points $x_0\in X(\bb{C})$ and $y_0\in Y(\bb{C})$.
We have $f^t(x,y)=(g_1(x,y),h_1(y))$ for some morphisms
$g_1\colon X\times Y\longrightarrow X$
and $h_1\colon Y\longrightarrow Y$.

For each $y \in Y(\bb{C}),$
the endomorphism $g_1\circ j_y\colon X\longrightarrow X$ is finite
because it is the restriction of $f^t$ to $j_y(X)\subset X\times Y$
and $f^t$ is finite (see \cite[Lemma 1]{Beau2}, \cite[Lemma 2.3 (1)]{Fujim}).
Hence $g_1\circ j_y$ is surjective for each $y\in Y(\bb{C})$.
We have a holomorphic map
$Y(\bb{C}) \longrightarrow \Sur(X)(\bb{C}), y \mapsto g_1\circ j_y.$
The image of $Y(\bb{C})$ in $\Sur(X)(\bb{C})$ is contained
in a left $\Aut^\circ(X)(\bb{C})$-orbit by Horst's theorem \cite[Theorem 3.1]{Horst}.
We have a holomorphic map
$\psi \colon Y(\bb{C}) \longrightarrow \Aut^\circ(X)(\bb{C})$
satisfying
$g_1\circ j_y=\psi (y)\circ g_1 \circ j_{y_0}$
for all $y \in Y(\bb{C})$.
If $b_1(X) = 0$, $\Aut^\circ(X)$ is a linear algebraic group
by Lemma \ref{Betti} (2).
Since $Y$ is a projective variety and $\Aut^\circ(X)$ is an affine variety,
$\psi$ is constant.
Similarly, if $\Aut^\circ(X)$ is a linear algebraic group, $\psi$ is constant.

Therefore, we conclude $\psi(y) = \id_X$ for all $y \in Y(\bb{C})$.
Putting $g(x) = g_1(x,y_0)$ and $h(y) = h_1(y)$,
we have $f^t = g\times h$.
\end{proof}

\subsection{Splitting of endomorphisms on product varieties (2)}\label{important2}
In this subsection, we consider smooth projective varieties $X$ and $Y$ satisfying
\begin{equation*}
\rank \NS(X \times Y) = \rank \NS(X) + \rank \NS(Y).
\end{equation*}
Under this assumption, we shall prove results similar to Theorem \ref{Splitting1}
for dominant endomorphisms on $X \times Y$.

\begin{lem}
\label{NS}
Let $X$ and $Y$ be smooth projective varieties satisfying
\begin{equation*}
\rank \NS(X \times Y) = \rank \NS(X) + \rank \NS(Y).
\end{equation*}
Fix $\bb{C}$-rational points $x_0 \in X(\bb{C})$ and $y_0 \in Y(\bb{C})$.
\begin{enumerate}
\item The following maps are isomorphisms and inverses to each other:
\begin{align*}
\NS(X)_{\bb{R}} \oplus \NS(Y)_{\bb{R}} \overset{\cong}{\longrightarrow} \NS(X \times Y)_{\bb{R}},
  &\qquad (\alpha,\beta) \mapsto \pr_1^{\ast} \alpha + \pr_2^{\ast} \beta, \\
\NS(X \times Y)_{\bb{R}} \overset{\cong}{\longrightarrow} \NS(X)_{\bb{R}} \oplus \NS(Y)_{\bb{R}},
  &\qquad \gamma \mapsto (j_{y_0}^{\ast} \gamma, i_{x_0}^{\ast} \gamma).
\end{align*}
\item For a closed subvariety $\iota_Z \colon Z \hookrightarrow X$
		and an element $\alpha \in \NS(X \times Y)_{\bb{R}}$,
the following are equivalent:
\begin{itemize}
\item $(\iota_Z \times \id_Y)^{\ast} \alpha \in \NS(Z \times Y)_{\bb{R}}$ is ample.
\item $(j_{y_0} \circ \iota_Z)^{\ast} \alpha \in \NS(Z)_{\bb{R}}$
		and $i_{x_0}^{\ast} \alpha \in \NS(Y)_{\bb{R}}$ are ample.
\end{itemize}
\end{enumerate}
\end{lem}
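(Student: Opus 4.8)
The plan is to derive part~(1) from the rank hypothesis by a dimension count, and then obtain part~(2) by restricting ample classes to the fibres of the two projections of $Z\times Y$.

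For part~(1), I would first compute the composite of the two displayed maps in the order ``second'' $\circ$ ``first''. Since $\pr_1\circ j_{y_0}=\id_X$ and $\pr_2\circ i_{x_0}=\id_Y$, while $\pr_2\circ j_{y_0}$ and $\pr_1\circ i_{x_0}$ are constant maps (and the pull-back of any divisor class along a constant map is trivial), one gets $j_{y_0}^\ast(\pr_1^\ast\alpha+\pr_2^\ast\beta)=\alpha$ and $i_{x_0}^\ast(\pr_1^\ast\alpha+\pr_2^\ast\beta)=\beta$; hence this composite is the identity, and in particular $(\alpha,\beta)\mapsto\pr_1^\ast\alpha+\pr_2^\ast\beta$ is injective. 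The rank hypothesis says exactly that its source and target are $\bb{R}$-vector spaces of the same finite dimension, so an injective linear map between them is an isomorphism, and the second map, being its one-sided inverse, is the two-sided inverse.

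For part~(2), using part~(1) I would write $\alpha=\pr_1^\ast\alpha_X+\pr_2^\ast\alpha_Y$ with $\alpha_X:=j_{y_0}^\ast\alpha$ and $\alpha_Y:=i_{x_0}^\ast\alpha$; note that $i_x^\ast\alpha=\alpha_Y$ for every $x\in X(\bb{C})$, again because $\pr_1\circ i_x$ is constant. Since $\pr_1\circ(\iota_Z\times\id_Y)=\iota_Z\circ\pr_1$ and $\pr_2\circ(\iota_Z\times\id_Y)=\pr_2$ (the projections now from $Z\times Y$), pulling the displayed decomposition back along $\iota_Z\times\id_Y$ gives
$$(\iota_Z\times\id_Y)^\ast\alpha=\pr_1^\ast\bigl((j_{y_0}\circ\iota_Z)^\ast\alpha\bigr)+\pr_2^\ast\bigl(i_{x_0}^\ast\alpha\bigr)\quad\text{in }\NS(Z\times Y)_\bb{R}.$$
If the two classes on the right are ample on $Z$ and on $Y$, then the right-hand side is ample on $Z\times Y$: this is the fact that the sum of the pull-backs of ample $\bb{R}$-classes along the two projections of a product is ample (the integral case is \cite[I\hspace{-.1em}I. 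Proposition $7.10$]{AG}, and the $\bb{R}$-coefficient version follows from it using openness of the ample cone together with ``nef $+$ ample $=$ ample''). Conversely, if $(\iota_Z\times\id_Y)^\ast\alpha$ is ample, I would restrict it to the fibre $Z\times\{y_0\}$ and to a fibre $\{x\}\times Y$ with $x\in Z(\bb{C})$; since the inclusion of $Z\times\{y_0\}$ followed by $\iota_Z\times\id_Y$ is $j_{y_0}\circ\iota_Z$, the inclusion of $\{x\}\times Y$ followed by $\iota_Z\times\id_Y$ is $i_{\iota_Z(x)}$, and the restriction of an ample class to a closed subvariety is ample, one obtains that $(j_{y_0}\circ\iota_Z)^\ast\alpha$ is ample on $Z$ and $i_{\iota_Z(x)}^\ast\alpha=i_{x_0}^\ast\alpha$ is ample on $Y$.

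I do not anticipate a serious obstacle: part~(1) is essentially formal once the rank hypothesis is assumed, and part~(2) reduces to carefully bookkeeping the compositions $\pr_i\circ j_{y_0}$, $\pr_i\circ i_{x_0}$, $\pr_i\circ(\iota_Z\times\id_Y)$ and invoking the standard behaviour of ampleness under pull-back along a projection and under restriction to a closed subvariety. The one point deserving a word of care is the passage from integral to $\bb{R}$-divisor classes in ``a sum of projection pull-backs of ample classes is ample'', which is handled by the openness argument indicated above; one should also note that the ampleness facts used are valid on the possibly singular closed subvariety $Z$.
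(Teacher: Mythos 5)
Your proof is correct and follows essentially the same route as the paper: both derive part~(1) formally from the rank hypothesis and then, for part~(2), use the decomposition $(\iota_Z\times\id_Y)^{\ast}\alpha=\pr_Z^{\ast}\bigl((j_{y_0}\circ\iota_Z)^{\ast}\alpha\bigr)+\pr_Y^{\ast}\bigl(i_{x_0}^{\ast}\alpha\bigr)$ together with the standard product criterion for ampleness. The only differences are cosmetic: the paper dismisses (1) as ``clear'' and cites the product ampleness criterion once for both directions, whereas you supply the one-sided-inverse plus dimension-count argument for (1), give the converse of (2) by restricting to the two coordinate fibres, and add a word about the integral-to-$\bb{R}$ passage.
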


\begin{proof}
Let $p$ and $\sigma$ be the first map and the second map, respectively,
in our statement of part (1).
Since $\sigma\circ p$ is identity map, $p$ is injective and $\sigma$ is surjective.
Furthermore, since the vector spaces $\NS(X)_{\bb{R}} \oplus \NS(Y)_{\bb{R}}$
and $\NS(X \times Y)_{\bb{R}}$ have the same dimension,
the maps $p$ and $\sigma$ are isomorphisms.
This is the assertion of part (1).
For part (2), by (1), we have
$\alpha = \pr_1^{\ast} j_{y_0}^{\ast} \alpha + \pr_2^{\ast} i_{x_0}^{\ast} \alpha$.
Hence we have
\begin{align*}
 (\iota_Z \times \id_Y)^{\ast} \alpha
 	&= (\pr_1 \circ (\iota_Z \times \id_Y))^{\ast} (j_{y_0}^{\ast} \alpha)
 			+(\pr_2 \circ (\iota_Z \times \id_Y))^{\ast} (i_{x_0}^{\ast} \alpha)\\
	&= \pr_Z^{\ast} ((j_{y_0} \circ \iota_Z)^{\ast} \alpha)
 		+ \pr_Y^{\ast} (i_{x_0}^{\ast} \alpha),
\end{align*}
where $\pr_Z \colon Z\times Y\longrightarrow Z$ and
$\pr_Y \colon Z\times Y\longrightarrow Y$ are projections.
Therefore,
$(\iota_Z \times \id_Y)^{\ast} \alpha$ is ample if and only if
$(j_{y_0} \circ \iota_Z)^{\ast} \alpha$ and $i_{x_0}^{\ast} \alpha$ are ample
(see \cite[Proposition 7.10]{AG}). 
\end{proof}

\begin{lem}
\label{DomBlock}
Let $X$ and $Y$ be smooth projective varieties satisfying
\begin{equation*}
\rank \NS(X \times Y) = \rank \NS(X) + \rank \NS(Y),
\end{equation*}
and $f \colon X \times Y \longrightarrow X \times Y$
an endomorphism.
If $\pr_2 \circ f \circ i_{x_0} \colon Y \longrightarrow Y$
is dominant for some $x_0 \in X(\bb{C})$,
we have
$f(x,y) = (g(x,y),h(y))$
for some morphisms
$g \colon X \times Y \longrightarrow X$ and $h \colon Y \longrightarrow Y$.
\end{lem}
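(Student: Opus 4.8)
The plan is to prove that $F:=\pr_2\circ f\colon X\times Y\longrightarrow Y$ factors through $\pr_2$; granting this, $F=h\circ\pr_2$ for a morphism $h\colon Y\longrightarrow Y$, and then $f(x,y)=(\pr_1\circ f(x,y),h(y))$, so $g:=\pr_1\circ f$ and $h$ work. We may assume $\dim X\geq 1$ and $\dim Y\geq 1$, the remaining cases being trivial. Fix a $\bb C$-rational point $y_0\in Y(\bb C)$, an ample divisor $H_X$ on $X$, and a very ample divisor $H_Y$ on $Y$ whose support does not contain the image of $\phi_{y_0}:=\pr_2\circ f\circ j_{y_0}\colon X\longrightarrow Y$ (possible since $H_Y$ is very ample). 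Put $h_{x_0}:=\pr_2\circ f\circ i_{x_0}\colon Y\longrightarrow Y$, which is dominant by hypothesis.

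First I would apply Lemma~\ref{NS}(1) to write, in $\NS(X\times Y)_{\bb R}$,
\[
F^{\ast}H_Y=\pr_1^{\ast}D+\pr_2^{\ast}E,\qquad D:=\phi_{y_0}^{\ast}H_Y,\quad E:=h_{x_0}^{\ast}H_Y .
\]
By the choice of $H_Y$, the class $D$ is represented by an effective divisor on $X$, while $E^{\dim Y}=\deg(h_{x_0})\cdot H_Y^{\dim Y}>0$ because $h_{x_0}$ is dominant. The key observation is a dimension count: since $F$ factors through the $(\dim Y)$-dimensional variety $Y$, we have $(F^{\ast}H_Y)^{\dim Y+1}=F^{\ast}(H_Y^{\dim Y+1})=0$ in $\NS(X\times Y)_{\bb R}$. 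Intersecting this identity with $\pr_1^{\ast}H_X^{\dim X-1}$ and substituting $F^{\ast}H_Y=\pr_1^{\ast}D+\pr_2^{\ast}E$, every term of the binomial expansion pairs a class of the wrong codimension on $X$ with one on $Y$, except the single term $(\dim Y+1)\,\pr_1^{\ast}\!\bigl(D\cdot H_X^{\dim X-1}\bigr)\cdot\pr_2^{\ast}\!\bigl(E^{\dim Y}\bigr)$; hence
\[
0=(\dim Y+1)\,\bigl(D\cdot H_X^{\dim X-1}\bigr)\bigl(E^{\dim Y}\bigr)
\]
as a degree on $X\times Y$. Since $E^{\dim Y}>0$, this forces $D\cdot H_X^{\dim X-1}=0$, and as $D$ is effective and $H_X$ ample, $D=0$ as a divisor. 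Equivalently, $\phi_{y_0}(X)$ is disjoint from the ample divisor $H_Y$; an ample divisor meets every curve, so $\phi_{y_0}(X)$ contains no curve and, being irreducible, is a single point. Thus $\phi_{y_0}=\pr_2\circ f\circ j_{y_0}$ is constant.

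It remains to upgrade ``$F$ contracts the fibre $X\times\{y_0\}$ of $\pr_2$ to a point'' to ``$F$ factors through $\pr_2$'', which is exactly the content of the rigidity lemma: $F\colon X\times Y\longrightarrow Y$ is a morphism from the product of the complete variety $X$ and the variety $Y$, constant on $X\times\{y_0\}$, so $F=h\circ\pr_2$ for some morphism $h\colon Y\longrightarrow Y$ (necessarily $h=h_{x_0}$), and we conclude as above. The step I expect to carry the real weight is the decomposition $F^{\ast}H_Y=\pr_1^{\ast}D+\pr_2^{\ast}E$: this is precisely where the hypothesis $\rank\NS(X\times Y)=\rank\NS(X)+\rank\NS(Y)$ is used, via Lemma~\ref{NS}; without it, $F^{\ast}H_Y$ need not split off the two factors and the dimension count above has no content. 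The rest — vanishing of $(F^{\ast}H_Y)^{\dim Y+1}$, positivity of $E^{\dim Y}$, and the rigidity lemma — is routine.
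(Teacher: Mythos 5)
Your proof is correct, but it takes a genuinely different route from the paper's. Both arguments reduce the lemma to showing that $\phi_{y_0}=\pr_2\circ f\circ j_{y_0}$ is constant, and both exploit the rank hypothesis via Lemma~\ref{NS}, but the mechanisms diverge. You use Lemma~\ref{NS}(1) to decompose $F^{\ast}H_Y=\pr_1^{\ast}D+\pr_2^{\ast}E$, then run a self-intersection computation: $(F^{\ast}H_Y)^{\dim Y+1}\cdot\pr_1^{\ast}H_X^{\dim X-1}=0$ collapses (for codimension reasons) to the single term $(\dim Y+1)(D\cdot H_X^{\dim X-1})(E^{\dim Y})$, and since $E^{\dim Y}>0$ (as $h_{x_0}$ is finite) and $D$ is effective, one forces $D=0$ and hence $\phi_{y_0}$ constant; you then finish with the rigidity lemma. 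The paper instead argues by contradiction using Lemma~\ref{NS}(2): if $\phi_{y_0}$ were nonconstant, restrict to a curve $\iota_Z\colon Z\hookrightarrow X$ on which $\phi_{y_0}$ is finite, observe that both $(j_{y_0}\circ\iota_Z)^{\ast}(\pr_2\circ f)^{\ast}\delta$ and $i_{x_0}^{\ast}(\pr_2\circ f)^{\ast}\delta$ are ample, conclude by Lemma~\ref{NS}(2) that $(\iota_Z\times\id_Y)^{\ast}(\pr_2\circ f)^{\ast}\delta$ is ample on $Z\times Y$, hence that $\pr_2\circ f\circ(\iota_Z\times\id_Y)\colon Z\times Y\to Y$ is finite, contradicting $\dim(Z\times Y)>\dim Y$; no rigidity lemma is invoked, since the same argument works for every $y_0$. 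Your numerical vanishing argument is a nice alternative to the paper's curve-restriction-plus-finiteness argument, and trading the "constant for all $y$" step for a single application of rigidity is a clean variant; both are essentially equivalent in depth, and both correctly isolate the N\'eron--Severi rank hypothesis as the key input.
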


\begin{proof}
It suffices to prove 
$\pr_2 \circ f \circ j_{y_0} \colon X \longrightarrow Y$
is constant for each $y_0$.
Assume that it is not constant.
Since $\dim \pr_2 \circ f \circ j_{y_0}(X) \geq 1$,
there is an irreducible curve $\iota_Z \colon Z \hookrightarrow X$ with
$\dim \pr_2 \circ f \circ j_{y_0}(Z) = 1$.
Let $\delta \in \NS(Y)_{\bb{R}}$ be the class of an ample divisor on $Y$.
Since
$\pr_2 \circ f \circ j_{y_0} \circ \iota_Z \colon Z \longrightarrow Y$
is finite, the pullback
$$
(\pr_2 \circ f \circ j_{y_0} \circ \iota_Z)^{\ast} \delta
= (j_{y_0} \circ \iota_Z)^{\ast} (\pr_2 \circ f)^{\ast} \delta
\in \NS(Z)_{\bb{R}}
$$
is ample.
On the other hand, since $\pr_2 \circ f \circ i_{x_0}$ is a dominant endomorphism on $Y$,
it is finite (see \cite[Lemma 1]{Beau2}, \cite[Lemma 2.3 (1)]{Fujim}),
and the pullback
$$
(\pr_2 \circ f \circ i_{x_0})^{\ast} \delta
= (i_{x_0})^{\ast} (\pr_2 \circ f)^{\ast} \delta
\in \NS(Y)_{\bb{R}}
$$
is ample.
Applying Lemma \ref{NS} (2) for
$\alpha = (\pr_2 \circ f)^{\ast} \delta$,
we see that
$$
(\iota_Z \times \id_Y)^{\ast} (\pr_2 \circ f)^{\ast} \delta
= (\pr_2 \circ f \circ (\iota_Z \times \id_Y))^{\ast} \delta
\in \NS(Z \times Y)_{\bb{R}}
$$
is ample.
Hence the morphism
$\pr_2 \circ f \circ (\iota_Z \times \id_Y) \colon Z \times Y \longrightarrow Y$
must be finite by the projection formula.
It is a contradiction because $\dim(Z\times Y)>\dim Y$.
Hence $\pr_2 \circ f \circ j_{y_0}$ is constant as required.
\end{proof}

\begin{thm}
\label{Splitting2}
Let $X$ and $Y$ be smooth projective varieties satisfying
\begin{equation*}
\rank \NS(X \times Y) = \rank \NS(X) + \rank \NS(Y)
\end{equation*}
and $f \colon X \times Y \longrightarrow X \times Y$
an endomorphism.
Then $f^t$ is split for some $t\geq 1$.
\end{thm}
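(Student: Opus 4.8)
The plan is to obtain a genuine product decomposition of a power of $f$ in two stages: first use Lemma~\ref{dominant2} together with Lemma~\ref{DomBlock} to put $f^t$ into a ``block--triangular'' shape, and then apply Lemma~\ref{DomBlock} a second time, with the two factors interchanged, to upgrade the triangular shape to an actual product. As in Theorem~\ref{Splitting1}, I take $f$ to be dominant (this is the case that is needed, and it is what makes Lemma~\ref{dominant2} applicable); recall that a dominant endomorphism of a projective variety is automatically surjective, and hence finite (see \cite[Lemma 1]{Beau2}, \cite[Lemma 2.3 (1)]{Fujim}).

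For the first stage, I would invoke Lemma~\ref{dominant2} to fix an integer $t\geq 1$ such that $\pr_2\circ f^t\circ i_x\colon Y\to Y$ is dominant for every $x\in X(\bb C)$; in particular it is dominant for some fixed $x_0$. Since the hypothesis $\rank\NS(X\times Y)=\rank\NS(X)+\rank\NS(Y)$ is symmetric in $X$ and $Y$, and $f^t$ is again an endomorphism of $X\times Y$, Lemma~\ref{DomBlock} applies to $f^t$ and produces morphisms $g\colon X\times Y\to X$ and $h\colon Y\to Y$ with $f^t(x,y)=(g(x,y),h(y))$ for all $(x,y)$.

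For the second stage, I would show that $g$ is already independent of the $Y$-variable, with no further iteration. Fix $y_0\in Y(\bb C)$. As $f^t$ is finite, its restriction to $X_{y_0}$ has finite fibres, so $f^t(X_{y_0})$ is a closed irreducible subvariety of $X\times\{h(y_0)\}$ of dimension $\dim X$, hence is all of $X\times\{h(y_0)\}$. Therefore the morphism $x\mapsto g(x,y_0)$, which is $\pr_1\circ f^t\circ j_{y_0}\colon X\to X$, is surjective, in particular dominant. Applying Lemma~\ref{DomBlock} to $f^t$ once more, now with the roles of $X$ and $Y$ exchanged, I obtain morphisms $g'\colon X\to X$ and $h'\colon X\times Y\to Y$ with $f^t(x,y)=(g'(x),h'(x,y))$. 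Comparing first coordinates in the two expressions for $f^t$ forces $g(x,y)=g'(x)$ for all $(x,y)$, so $f^t(x,y)=(g'(x),h(y))$; that is, $f^t=g'\times h$ is split, which is the assertion.

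Once Lemmas~\ref{dominant2} and~\ref{DomBlock} are in hand the argument is mostly bookkeeping, so I do not expect a serious obstacle. The points deserving care are that Lemma~\ref{DomBlock} is being applied in the ``transposed'' direction --- which is precisely where the symmetry of the N\'eron--Severi rank hypothesis enters, since that hypothesis is what makes the transposed statement available --- and that a single exponent $t$ suffices for both applications; the latter is the reason I would derive the dominance of $x\mapsto g(x,y_0)$ directly from the finiteness of $f^t$ rather than from a separate use of Lemma~\ref{dominant2}, which could only return a possibly different exponent.
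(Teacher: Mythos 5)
Your proof is correct and follows the same two-stage strategy as the paper: first obtain the block-triangular form $f^{t}(x,y) = (g(x,y), h(y))$ via Lemmas~\ref{dominant2} and~\ref{DomBlock}, then apply the transposed Lemma~\ref{DomBlock} to eliminate the $Y$-dependence of the $X$-component. Your second stage is slightly leaner than the paper's: you derive the dominance of $\pr_1\circ f^t\circ j_{y_0}$ directly from the finiteness of $f^t$ together with the already-established block-triangular constraint (so the image of $X_{y_0}$ is forced into the slice $X\times\{h(y_0)\}$ and then fills it by a dimension count), whereas the paper re-invokes Lemma~\ref{dominant2}, this time applied to $f^{t_1}$ with $X$ and $Y$ interchanged, to produce an extra exponent $t_2$ and conclude that $f^{t_1 t_2}$ is split; your variant gets splitting at the single exponent $t$ and saves one application of Lemma~\ref{dominant2}.
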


\begin{proof}
Applying Lemma \ref{dominant2} and Lemma \ref{DomBlock},
there is an integer $t_1 \geq 1$ such that
$f^{t_1}(x,y) = (g_1(x,y),h_1(y))$ for some morphisms
$g_1\colon X \times Y \longrightarrow X$ and
$h_1\colon Y \longrightarrow Y$.
Changing the role of $X$ and $Y$ and applying Lemma \ref{dominant2}
and Lemma \ref{DomBlock} again,
there is an integer $t_2 \geq 1$ such that
$f^{t_1 t_2}(x,y) = (g_2(x),h_2(x,y))$ for some morphisms
$g_2\colon X \longrightarrow X$ and
$h_2\colon X \times Y \longrightarrow Y$.
Then we have $f^{t_1t_2}(x,y)=(g_2(x),h_1^{t_2}(y)).$
Hence $f^{t_1t_2}$ is split.
\end{proof}

\section{Proof of Theorem {\ref{main1}}}\label{cor1}
The base field $k$ is a number field in this section.

For a smooth projective variety $X$ defined over $k$,
we say ``\textit{Conjecture \ref{KS} is true for endomorphisms on $X$}''
if Conjecture \ref{KS} is true for
every dominant endomorphism $f \colon X \longrightarrow X$
defined over a finite extension of $k$
and every $\var{k}$-rational point $P \in X(\var{k})$
with Zariski dense forward $f$-orbit.

\begin{lem}\label{reduction of splitting}
Let $X$ and $Y$ be smooth projective varieties defined over $k$,
and $f \colon X \times Y\longrightarrow X\times Y$
an endomorphism defined over $k$.
Fix an embedding $k \hookrightarrow \bb{C}$.
Assume that $f$ is split over $\bb{C}$, i.e.,
there exist endomorphisms $g\colon X\longrightarrow X$ and $h\colon Y\longrightarrow Y$
defined over $\bb{C}$ satisfying $f=g\times h$.
Then, there is a finite extension $k'/k$ such that $f$ is split over $k'$.
\end{lem}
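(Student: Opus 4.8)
The plan is to descend the splitting by exhibiting $g$ and $h$ as explicit compositions of $f$ with fiber-inclusion morphisms, chosen to be defined over a suitable finite extension, and then to apply faithfully flat descent of morphisms along $\Spec\bb{C}\to\Spec k'$.

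First I would choose the finite extension. Since $X$ and $Y$ are of finite type over the number field $k$, each possesses a closed point, and the residue field at such a point is a finite extension of $k$; fixing embeddings of these residue fields into the algebraic closure $\var{k}$ of $k$ inside $\bb{C}$ and passing to a compositum, we obtain a finite extension $k'/k$ with $k'\subset\bb{C}$ together with points $x_0\in X(k')$ and $y_0\in Y(k')$. (If $X$ or $Y$ is a point the statement is trivial, so this loses nothing.) Over $k'$ we then have the closed immersions $i_{x_0}\colon Y\hookrightarrow X\times Y$, $y\mapsto(x_0,y)$, and $j_{y_0}\colon X\hookrightarrow X\times Y$, $x\mapsto(x,y_0)$, and I set
\[
g:=\pr_1\circ f\circ j_{y_0}\colon X\longrightarrow X,\qquad h:=\pr_2\circ f\circ i_{x_0}\colon Y\longrightarrow Y,
\]
which are morphisms defined over $k'$ because $f,\pr_1,\pr_2$ are defined over $k$ and $i_{x_0},j_{y_0}$ are defined over $k'$.

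Next I would verify the identity $f=g\times h$ after base change to $\bb{C}$. By hypothesis $f_{\bb{C}}=\widetilde g\times\widetilde h$ for some endomorphisms $\widetilde g$ of $X_{\bb{C}}$ and $\widetilde h$ of $Y_{\bb{C}}$; composing with $(j_{y_0})_{\bb{C}}$ resp.\ $(i_{x_0})_{\bb{C}}$ and projecting gives $g_{\bb{C}}=\widetilde g$ and $h_{\bb{C}}=\widetilde h$ (the values $\widetilde h(y_0)$ and $\widetilde g(x_0)$ being irrelevant, as they are erased by the projections). Hence $f_{\bb{C}}=g_{\bb{C}}\times h_{\bb{C}}=(g\times h)_{\bb{C}}$.

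Finally, $f$ and $g\times h$ are two morphisms of $k'$-schemes $X\times Y\to X\times Y$ that agree after the faithfully flat base change $\Spec\bb{C}\to\Spec k'$; since the target is separated, their equalizer is a closed subscheme of $X\times Y$, and its ideal sheaf becomes zero after tensoring with $\bb{C}$ over $k'$, hence is already zero. Therefore $f=g\times h$ over $k'$, i.e.\ $f$ is split over $k'$. I do not expect a serious obstacle here: the only steps needing care are the passage to a finite extension over which $X$ and $Y$ acquire rational points (so that $i_{x_0},j_{y_0}$ make sense over $k'$) and the standard descent argument in the last step, both of which are routine.
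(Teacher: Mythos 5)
Your proof is correct and takes essentially the same approach as the paper: both define $g:=\pr_1\circ f\circ j_{y_0}$ and $h:=\pr_2\circ f\circ i_{x_0}$ for points $x_0,y_0$ defined over a finite extension, and deduce $f=g\times h$ from the hypothesis by comparing with the given splitting over $\bb{C}$. The only difference is that you spell out the final descent step (agreement after base change to $\bb{C}$ implies agreement over $k'$) via faithfully flat descent, whereas the paper leaves that point implicit.
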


\begin{proof}
Take $\overline{k}$-rational points $x_0 \in X(\overline{k})$ and $y_0 \in Y(\overline{k})$.
We put $g_0 := \pr_1 \circ f \circ j_{y_0}$ and $h_0 = \pr_2 \circ f \circ i_{x_0}$.
These are endomorphisms on $X$ and $Y$, respectively, defined over $\overline{k}$.
By assumption, $f(x,y) = (g_0(x), h_0(y))$ for all $x \in X(\bb{C})$ and $y \in Y(\bb{C})$.
Hence $f = g_0 \times h_0$ as endomorphisms defined over $\overline{k}$.
Since $g_0$ and $h_0$ are defined over a finite extension of $k$,
the assertion follows.
\end{proof}

\begin{lem}\label{Lemma:strong}
Let $X$ and $Y$ be smooth projective varieties defined over $k$.
Assume that Conjecture \ref{KS} is true for endomorphisms on $X$ and $Y$.
Moreover, assume that at least one of the following conditions is satisfied:
\begin{itemize}
\item $b_1(X) = 0$,
\item $b_1(Y) = 0$,
\item $\Aut^0(X)$ and $\Aut^0(Y)$ are linear algebraic groups, or
\item $\rank \NS(X \times Y) = \rank \NS(X) + \rank \NS(Y)$.
\end{itemize}
Then for any endomorphisms on $X\times Y$, there is an integer $t\geq 1$ such that
the iteration $f^t$ splits.
Moreover, Conjecture \ref{KS} is true for endomorphisms on $X \times Y$.
Moreover if $b_1(X)=b_1(Y)=0,$ we have $b_1(X\times Y)=0.$
\end{lem}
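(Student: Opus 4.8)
The plan is to reduce to the split case using the results of Section~\ref{Splitting of endomorphisms}, then conclude via Lemma~\ref{prod} and Lemma~\ref{iterate}. Let $f \colon X \times Y \longrightarrow X \times Y$ be a dominant endomorphism defined over some finite extension $k_0$ of $k$, and let $P \in (X\times Y)(\var{k})$ have Zariski dense forward $f$-orbit; we must show $\alpha_f(P) = \delta_f$, the existence of $\alpha_f(P)$ being automatic by Remark~\ref{convergence}.

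First I would fix an embedding $k_0 \hookrightarrow \bb{C}$ and base change to $\bb{C}$. The first three hypotheses of the lemma are precisely those of Theorem~\ref{Splitting1}, and the fourth is precisely that of Theorem~\ref{Splitting2}, so in every case $f^t$ is split over $\bb{C}$ for some $t \geq 1$. By Lemma~\ref{reduction of splitting} applied with base field $k_0$, there is a finite extension $k'$ of $k_0$ --- hence of $k$ --- and dominant endomorphisms $g \colon X \longrightarrow X$ and $h \colon Y \longrightarrow Y$ defined over $k'$ with $f^t = g \times h$; here $g$ and $h$ are dominant because $f^t = g \times h$ is. Since Conjecture~\ref{KS} holds for endomorphisms on $X$ and on $Y$, and $g,h$ are defined over the finite extension $k'$ of $k$, Conjecture~\ref{KS} holds for $g$ and for $h$. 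Lemma~\ref{prod} then yields Conjecture~\ref{KS} for $g \times h = f^t$, and Lemma~\ref{iterate} upgrades this to Conjecture~\ref{KS} for $f$, proving the first assertion.

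For the final assertion, when $b_1(X) = b_1(Y) = 0$ the K\"unneth decomposition $H^1((X\times Y)(\bb{C}),\bb{Q}) \cong H^1(X(\bb{C}),\bb{Q}) \oplus H^1(Y(\bb{C}),\bb{Q})$ gives $b_1(X\times Y) = b_1(X) + b_1(Y) = 0$. I do not expect a serious obstacle: all the substantial work resides in Theorems~\ref{Splitting1} and~\ref{Splitting2}, already proved. The only points needing care are the bookkeeping of base fields --- in particular the use of Lemma~\ref{reduction of splitting} to descend the splitting from $\bb{C}$ to a number field, over which the height-theoretic input of Lemma~\ref{prod} is available --- and verifying that the split factors inherit dominance from $f$.
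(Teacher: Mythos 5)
Your proof is correct and follows the paper's own argument: apply Theorem~\ref{Splitting1} or Theorem~\ref{Splitting2} to split an iterate $f^t$ over $\bb{C}$, descend the splitting to a number field by Lemma~\ref{reduction of splitting}, then invoke Lemma~\ref{prod} and Lemma~\ref{iterate}, with K\"unneth handling the Betti-number claim. Your bookkeeping of the base fields (working over a finite extension $k_0$ of $k$, then passing to $k'$) and your explicit remark that $g,h$ inherit dominance from $f^t$ are slightly more careful than the paper's exposition but do not change the substance.
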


\begin{proof}
Let $f \colon X \times Y\longrightarrow X\times Y$
be an endomorphism defined over $k$.
If one of the first, second, or third condition is satisfied,
we apply Theorem \ref{Splitting1} to conclude that
$f^t$ is split for some $t\geq 1$.
If the fourth condition is satisfied, we apply Theorem \ref{Splitting2}
to conclude that $f^t$ is split over $\bb{C}$ for some $t \geq 1$.
In any of these cases, by Lemma \ref{reduction of splitting}, we have $f^t=g\times h$
for endomorphisms $g\colon X\longrightarrow X$ and
$h \colon Y\longrightarrow Y$ defined over a finite extension $k'$ of $k$.

Since Conjecture \ref{KS} is true for $g$ and $h$, it is true for $f^t$ by Lemma \ref{prod}.
Therefore, it is true for $f$ by Lemma \ref{iterate}.

Finally, the assertion on the first Betti number follows from the equality
$b_1(X\times Y)=b_1(X)+b_1(Y).$
\end{proof}

\begin{lem}\label{surface}
Let $X$ be an Enriques or a K3 surface defined over $k$.
Then $b_1(X)=0$ and
Conjecture \ref{KS} is true for endomorphisms on $X$.
\end{lem}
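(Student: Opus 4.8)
The plan is to reduce the assertion to the case of automorphisms of smooth projective surfaces, for which Conjecture \ref{KS} is already known (\cite[Theorem 2 (c)]{eg}; see Remark \ref{results}), by proving that \emph{every} dominant endomorphism of a K3 or Enriques surface is in fact an automorphism. The equality $b_1(X)=0$ is a separate elementary point, which I would dispose of first: by Lemma \ref{Betti} (1) it suffices to prove $\Alb(X)=0$, and since $\dim\Alb(X)=h^1(X,\mathcal{O}_X)$, this is the classical vanishing $h^1(X,\mathcal{O}_X)=0$ valid for K3 and Enriques surfaces (alternatively one quotes $b_1(X)=2\dim\Alb(X)$, as in the proof of Lemma \ref{Betti}).

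For the main point, let $f\colon X\longrightarrow X$ be a dominant endomorphism defined over a finite extension $k'$ of $k$. Fixing an embedding $k'\hookrightarrow\bb{C}$ and base changing, I may assume $X$ is defined over $\bb{C}$, because the property of being an isomorphism descends along the faithfully flat extension $\bb{C}/k'$. Then $f$ is surjective, hence finite (\cite[Lemma 1]{Beau2}, \cite[Lemma 2.3 (1)]{Fujim}); let $d=\deg f$ and let $R\geq 0$ be the ramification divisor, so that $K_X=f^{\ast}K_X+R$ in $\Pic(X)$. If $X$ is a K3 surface, then $K_X=\mathcal{O}_X$, so the class of $R$ equals $K_X-f^{\ast}K_X=0$ and hence $R=0$. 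If $X$ is an Enriques surface, then $K_X$ is a nontrivial $2$-torsion class, so the class of $R$, namely $K_X-f^{\ast}K_X$, is again $2$-torsion; but $R$ is effective, and a torsion line bundle with a nonzero global section is trivial, so the class of $R$ is trivial and $R=0$ (and $f^{\ast}K_X=K_X$). In either case $f$ is unramified in codimension one, hence \'etale (purity of the branch locus). Viewing $f$ analytically, it exhibits the compact complex surface $X(\bb{C})$ as a $d$-sheeted covering space of itself, so $\chi_{\mathrm{top}}(X(\bb{C}))=d\cdot\chi_{\mathrm{top}}(X(\bb{C}))$; since this Euler number equals $24$ for a K3 surface and $12$ for an Enriques surface, it is nonzero, forcing $d=1$. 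Thus $f$ is an automorphism.

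Given this, for any $\var{k}$-rational point $P\in X(\var{k})$ with Zariski dense forward $f$-orbit, Conjecture \ref{KS} holds because $f$ is an automorphism of a smooth projective surface, which is precisely \cite[Theorem 2 (c)]{eg}; this finishes the proof. None of the steps is deep, and the only places that call for a little care are the Enriques case of the ramification computation, where one uses that the nontrivial torsion canonical class carries no effective divisor, and the (routine) descent of the isomorphism property from $\bb{C}$ back to the field $k'$ over which $f$ is defined. The real content of the lemma is imported from the already-established surface-automorphism case of the Kawaguchi--Silverman conjecture.
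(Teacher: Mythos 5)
Your proof is correct and follows the same route as the paper: reduce to the known case of surface automorphisms by showing every dominant endomorphism of a K3 or Enriques surface is an automorphism. The only difference is that the paper simply cites this last fact (as \cite[Corollary 2.4]{Fujim}), whereas you supply the classical ramification-divisor plus Euler-characteristic argument for it; both are fine.
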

\begin{proof}
It is well-known that the first Betti number of an Enriques or a K3 surface is zero
(see \cite[Theorem VIII. 2]{Beau1}).
It is well-known that any dominant endomorphism on an Enriques or a K3 surface
is an automorphism.
This fact is proved by K. Peters in \cite[Satz 9]{Peters}
(see also \cite[Corollary 2.4]{Fujim} or \cite[Theorem 7.6.11]{Kob}).
Since Conjecture \ref{KS} is true for automorphisms
on smooth projective surfaces (\cite[Theorem 2 (c)]{eg}),
Conjecture \ref{KS} is true for endomorphisms on an Enriques or a K3 surface.
\end{proof}

Now, we shall complete the proof of Theorem \ref{main1}.

\begin{proof}[Proof of Theorem \ref{main1}]
Recall that each $X_i$ satisfies at least one of the following conditions:
\begin{itemize}
\item $b_1(X_i) = 0$ and $\rank \NS(X_i) = 1$,
\item $X_i$ is an abelian variety,
\item $X_i$ is an Enriques surface, or
\item $X_i$ is a $K3$ surface.
\end{itemize}

Let $I$ be the set of indices where $X_i$ is an abelian variety.
We put $Y := \prod_{i \in I} X_i$ and $Z = \prod_{i \notin I} X_i$.

Then $Y$ is an abelian variety of dimension $\sum_{i \in I} \dim X_i$.
Conjecture \ref{KS} is true for endomorphisms on $Y$
by \cite[Corollary 32]{ab1}, \cite[Theorem 2]{ab2}.

Note that $b_1(X_i)=0$ for $i\not\in I$ by Lemma \ref{surface}.
Then Conjecture \ref{KS} is true for any endomorphisms on each $X_i$
by \cite[Theorem 2 (a)]{eg} and Lemma \ref{surface},
so Lemma \ref{Lemma:strong} shows
Conjecture \ref{KS} for any endomorphisms on $Z$ and $b_1(Z)=0$.

By Lemma \ref{Lemma:strong}, Conjecture \ref{KS} is true for endomorphisms on $Y \times Z$.
\end{proof}

\section{Proof of Theorem {\ref{main2}}}\label{cor2}
\begin{proof}[Proof of Theorem \ref{main2}]
We may assume $k=\bb{C}$ and $Y$ is of general type.
Then the automorphism group $\Aut(Y)$ is finite
(\cite[Corollary 2]{Matsumura}, \cite[Theorem 11.12]{Iitaka}).
Hence the neutral component $\Aut^\circ(Y)$ is trivial.

Applying Lemma \ref{Fibration}, there is an integer $t \geq 1$
such that $f^t(x,y) = (g(x,y),h(y))$ for some morphisms
$g\colon X\times Y \longrightarrow X$ and
$h\colon Y\longrightarrow Y$.

If $h$ is not surjective, the assertion is trivial. Thus, we may assume $h$ is surjective.
Since $Y$ is of general type,
$h$ is an automorphism (see \cite[Theorem 1]{KO}, \cite[Theorem 7.6.1]{Kob}, or \cite[Proposition 2.6]{Fujim}).
Replacing $t$ by $t \cdot |\Aut(Y)|$,
we may assume $h = \id_Y$.

Then, for every $\bb{C}$-rational point $P \in (X \times Y)(\bb{C})$,
we have $\pr_2 (P) = \pr_2 (f^{t}(P))$.
The forward $f$-orbit $\mathcal{O}_f(P)$ is contained in the set
$$ \{\, Q \in (X \times Y)(\bb{C}) : \pr_2 (Q) = \pr_2 (f^{j}(P)) \ \text{for some}\ 0 \leq j < t \,\},$$
which is not Zariski dense in $X \times Y$
(in fact, this is of dimension $\leq \dim X$).
\end{proof}

\section*{acknowledgements}
This paper is a revised version of the master's thesis of the author.
In writing this paper,
Tetsushi Ito made a polite and delicate guidance for the author.
The author would like to thank Shu Kawaguchi
and Serge Cantat
for their advice on the definition and properties of dynamical degrees.
It is very kind of them to assist the author to complete this paper.

\end{document}